\documentclass[12pt]{amsart}

\usepackage{enumitem}
\usepackage{psfrag}
\usepackage{graphicx}
\usepackage{pinlabel}
\usepackage{graphicx}
\usepackage{amsmath}
\usepackage{amssymb}
\usepackage{amscd}
\usepackage{times}
\usepackage{pb-diagram}
\usepackage{graphicx}
\usepackage{wrapfig}
\usepackage{xspace}
\usepackage{hyperref}
\usepackage{url}
\usepackage{caption}
\usepackage{subcaption}
\usepackage{fancyhdr}
\usepackage{overpic}
\usepackage{color}
\usepackage[square,comma,sort&compress,numbers]{natbib}
\usepackage{latexsym}
\usepackage{mathrsfs}
\usepackage{amsfonts}
\usepackage{hyperref}
\usepackage{amsthm}
\usepackage{appendix}
\usepackage{pdfsync}

\theoremstyle{definition}
\newtheorem{theorem}{Theorem}[section]
\newtheorem{lemma}[theorem]{Lemma}
\newtheorem{proposition}[theorem]{Proposition}

\newtheorem{definition}[theorem]{Definition}

\newtheorem{remark}[theorem]{Remark}
\newtheorem*{theorem*}{Theorem}
\begin{document}

\title{\bf 	Combinatorial Yamabe flow on infinitely triangulated hyperbolic surfaces}
\author{YueRong Bian,
	Xu Xu}

\date{\today}

\address{School of Mathematics and Statistics, Wuhan University, Wuhan, 430072, P.R.China}
\email{yuerongbian@whu.edu.cn}

\address{School of Mathematics and Statistics, Wuhan University, Wuhan, 430072, P.R.China}
\email{xuxu2@whu.edu.cn}

\thanks{MSC (2020): 52C25,52C26}

\keywords{Combinatorial Yamabe flow; infinitely triangulated surfaces; Piecewise hyperbolic metrics; Vertex scaling}

\begin{abstract}
We study the combinatorial Yamabe flow on infinitely triangulated surfaces with piecewise hyperbolic metrics.
Under the assumptions of uniformly bounded vertex degree and $\epsilon$-uniformly nondegenerate initial metric, we first establish the short-time existence of smooth solutions to the combinatorial Yamabe flow.
Under the additional $\epsilon$-uniformly Delaunay condition on the initial metric, we further obtain the short-time uniqueness of solutions to the flow.
To address the potential degeneration of triangles along the evolution, we introduce an extended flow with generalized curvature, and establish the global existence of solutions to the extended flow.
Furthermore, under uniformly bounded vertex degrees and some integrability condition, we establish the uniqueness of solutions to this extended flow, which follows from the stability property of the solutions.
These results provide a well-posedness theory for both the hyperbolic combinatorial Yamabe flow (locally in time) and its extended flow (globally in time) on infinitely triangulated surfaces.

\end{abstract}

\maketitle

\section{Introduction}

\subsection{Background}

To study the discrete conformal geometry of polyhedral metrics on manifolds, vertex scaling for Euclidean polyhedral metrics (piecewise linear metrics or PL metrics for short) on triangulated surfaces was independently proposed physically by R\v{o}cek-Williams \cite{Rocek} and mathematically by Luo \cite{Luo1}.
This transformation acts as a natural discrete analogue of the conformal transformations in Riemannian geometry.
Luo \cite{Luo1} also defined the combinatorial Yamabe flow and studied its properties.
Motivated by this framework of vertex scaling for PL metrics on polyhedral surfaces, Bobenko-Pinkall-Springborn \cite{BPS} subsequently proposed the corresponding definition of vertex scaling for hyperbolic polyhedral metrics (piecewise hyperbolic metrics or PH metrics for short ).
They also prove the global rigidity of Euclidean and hyperbolic vertex scalings.
A new elementary proof of the rigidity of Euclidean and hyperbolic vertex scalings can be found in \cite{XZ}.
Gu-Luo-Sun-Wu \cite{GLSW} and Gu-Guo-Luo-Sun-Wu \cite{GGLSW} then proved the existence of PL and PH metrics  respectively.
The convergence of vertex scalings have been
extensively studied in the literature.
See \cite{GLW,LSW,Wu,WuZhu} and others.

Finding PL metrics and PH metrics with prescribed combinatorial curvatures on surfaces constitutes a fundamental problem in discrete conformal geometry.
Combinatorial curvature flows serve as a powerful strategy to handle such geometric problems.
A variety of combinatorial curvature flows have been established for vertex scalings of PL metrics and PH metrics, including the combinatorial Yamabe flows
with surgery introduced in \cite{GLSW} and \cite{GGLSW}, where the long-time existence, uniqueness and convergence were proved.
Ge \cite{Gphd} introduced the combinatorial Calabi flow for vertex scaling of PL metrics on triangulated surfaces and proved the short time existence.
The combinatorial Calabi flow for vertex scaling of PH metric with surgery was introduced in \cite{ZXu} and the fractional combinatorial Calabi flow was introduced
in \cite{WuXu}.
For additional investigations into combinatorial curvature flows of vertex scalings for PL and PH metrics, see \cite{GY,Guo, XZ2,ZGZ} and others.

Most existing studies on combinatorial curvature flows have been conducted in the setting of finitely triangulated surfaces, whereas investigations into curvature flows on infinitely triangulated surfaces remain rather scarce.
Recently, Ge-Hua-Zhou \cite{GHZ} adopted parabolic techniques from partial differential equation theory to examine the combinatorial Ricci flow for Thurston’s circle packings on infinite disk triangulated surfaces in Euclidean and hyperbolic background geometries.
Subsequently, Ji \cite{JI} investigated Luo’s combinatorial Yamabe flow \cite{Luo1} for vertex scalings on infinitely triangulated surfaces in Euclidean background geometry.

Motivated by the work in \cite{JI}, we investigate the fundamental properties of the combinatorial Yamabe flow associated with vertex scalings on infinitely triangulated surfaces in hyperbolic background geometry.
In particular, we establish a well-posedness theory for the hyperbolic combinatorial Yamabe flow on infinitely triangulated surfaces with uniformly bounded vertex degree.
Under the assumption that the initial metric is $\epsilon$-uniformly nondegenerate, the short-time existence of smooth flow solutions is verified.
With an extra $\epsilon$-uniformly Delaunay condition  imposed on initial data, we further derive the short-time uniqueness of solutions.
A major obstacle in studying the long-time dynamics of the hyperbolic combinatorial Yamabe flow is the potential formation of singularities along the flow evolution.
To overcome this difficulty, we employ an extended notion of combinatorial curvature to treat the possible singularities that may arise along the flow.
With this extension, we establish the long-time existence of solutions to the extended hyperbolic combinatorial Yamabe flow.
Moreover, on infinitely triangulated surfaces with uniformly bounded vertex degrees, we establish a stability estimate for the extended flow under the some integral condition, from which uniqueness follows as a special case.

Let $M$ be a surface, and $\mathcal{T}=(V,E,F)$ be a triangulation without boundary, where $V$, $E$, and $F$ denote the sets of vertices, edges, and faces respectively.
Here $V$ is an infinite subset of $M$ with $|V|=\infty$.
In this article, we always assume that the triangulation $\mathcal{T}$ is locally finite.
We use $i$, $\{ij\}$, $\triangle ijk$ to denote a vertex, an edge and a face respectively, where $i$, $j$, $k$ are natural numbers.
We denote by $i\sim j$ if the vertex $i$ is adjacent to $j$.
The number of edges incident to vertex $i$ is called its degree and is denoted by $\deg(i)$.
A PH metric on $(M,\mathcal{T})$ is a hyperbolic cone metric on $M$ whose cone singularities are contained in $V$. Equivalently, it can be described by a length function $d: E \to (0, +\infty)$ such that for each face $\triangle ijk \in F$, the three numbers $d_{ij} := d(\{ij\})$, $d_{ik}$, $d_{jk}$ satisfy the strict hyperbolic triangle inequalities, i.e., they can be realized as the side lengths of a non-degenerate hyperbolic triangle. Conversely, given any such length function $d$, one can construct a PH metric by gluing non-degenerate hyperbolic triangles isometrically along their edges in pairs. Thus, we shall henceforth identify a PH metric with its associated length function $d$ on $E$.
The combinatorial curvature $K:V\to (-\infty,2\pi)$ describes the conic singularities of PH metrics at the vertices.
The combinatorial curvature  at each vertex $i$ is
\begin{equation}\label{Eq: K}
	K_i =2\pi- \sum_{\triangle ijk \in F} \theta^{jk}_i,
\end{equation}
where $\theta _{i}^{jk} $ is the interior angle of the hyperbolic triangle $\triangle ijk$ at the vertex $i$.

Let $p\in[1,\infty)$. We define the $\ell^p$-space over the vertex set $V$:
\begin{equation*}
	\ell^p(V) = \biggl\{ w: V\to \mathbb{R} \,\bigg|\, \|w\|_{\ell^p(V)} < \infty \biggr\},
\end{equation*}
where
\[
\|w\|_{\ell^p(V)} = \biggl( \sum_{i\in V} |w_i|^p \biggr)^{\frac{1}{p}}.
\]
We refer to the property $\|w\|_{\ell^p(V)}<\infty$ as the $\ell^p$-summability condition.
Moreover, the $\ell^p$-norms satisfy the monotonicity: if $1\le p \le q < \infty$, then $\ell^p(V) \subset \ell^q(V)$, and
$\|w\|_{\ell^q(V)} \le \|w\|_{\ell^p(V)}$
for all $w\in \ell^p(V)$.

We denote by $L^1([0, T); \ell^1(V))$ the space of functions $f: [0, T) \times V \to \mathbb{R}$ such that
\begin{equation*}
	\int_0^T \bigl\|f(t,\cdot)\bigr\|_{\ell^1(V)}\,dt < \infty.
\end{equation*}

\begin{definition}\cite{BPS}
	\label{Def: HVS}
Let $d,\tilde{d}:E\to(0,\infty)$ be two PH metrics on $(M,\mathcal{T})$.
We say that $\tilde{d}$ is a hyperbolic vertex scaling of $d$ if there exists a function $u:V\to\mathbb{R}$ such that for every edge $\{ij\}\in E$,
	\begin{equation}
		\sinh\frac{\tilde{d}_{ij}}{2}=e^{\frac{u_i+u_j}{2}}\sinh\frac{d_{ij}}{2}.
	\end{equation}
The function $u$ is called a discrete conformal factor, and we denote the relationship between $d$ and $\tilde{d}$ by $\tilde{d}=u\ast d$.
\end{definition}

\begin{definition}
	\label{Def: CYF}
Let $(M,\mathcal{T})$ be a locally finitely triangulated surface  with a PH metric $d_0$.
The combinatorial Yamabe flow on $(M,\mathcal{T})$ is defined by
	\begin{equation}\label{Eq: CYF}
		\begin{cases}
			\dfrac{du_i}{dt} = -K_i,\\[4pt]
			u_i(0) = 0.
		\end{cases}
	\end{equation}
Here $K_i$ is the combinatorial curvature of the PH metric $u*d_0$ at time $t$ at the vertex $i$.
\end{definition}

\begin{definition}
	\label{Def: CRF}
	Let $d$ be a PH metric on $\mathcal{T}=(V,E,F)$. For a triangle $\triangle ijk\in F$, denote by $\theta_i^{jk}$ the interior angle at vertex $i$ in the hyperbolic triangle $\triangle ijk$.
	\begin{itemize}
		\item[\textbf{(a)}] The PH metric $d$ is $\epsilon$-uniformly nondegenerate if there exists a constant $\epsilon>0$ such that $\theta_i^{jk}\ge\epsilon$ for every $\triangle ijk\in F$.
		\item[\textbf{(b)}] The PH metric $d$ is $\epsilon$-uniformly Delaunay if there exists a constant $\epsilon>0$ such that for any two adjacent triangles $\triangle ijk_1,\triangle ijk_2\in F$ sharing an edge $\{ij\}\in E$,
		\[
		\theta_{k_1}^{ij}+\theta_{k_2}^{ij}\le \theta_i^{jk_1}+\theta_j^{ik_1}+\theta_i^{jk_2}+\theta_j^{ik_2}-\epsilon.
		\]
	\end{itemize}
\end{definition}
If $0<\epsilon_1\le \epsilon_2$, then any $\epsilon_2$-uniformly nondegenerate \textup{(resp. }$\epsilon_2$-uniformly Delaunay\textup{)} PH metric is also $\epsilon_1$-uniformly nondegenerate \textup{(resp. }$\epsilon_1$-uniformly Delaunay\textup{)}.
See \cite{GGLSW, leibon} for more information on Delaunay metrics on triangulated surfaces.

\begin{theorem}[Short-time existence]
	\label{thm :ste}
	Let $(M,\mathcal{T})$ be an infinitely triangulated surface with vertex degree bounded by $D$, and let $d_0$ be an initial PH metric on $\mathcal{T}$ which is $\epsilon$-uniformly nondegenerate.
	Then there exists a positive constant $T_0=T_0(\epsilon,D)$ such that the flow \eqref{Eq: CYF} admits a solution $u\in C_t^\infty(V\times[0,T_0])$.
\end{theorem}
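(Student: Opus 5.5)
Since $V$ is infinite, \eqref{Eq: CYF} is an infinite system of ODEs, and I plan to solve it by a Picard iteration in the Banach space $\ell^\infty(V)$, equipped with the sup-norm. I write the right-hand side as a map $F(u)_i=-K_i(u*d_0)$ and work on a small closed ball $B_r=\{u\in\ell^\infty(V):\|u\|_{\ell^\infty}\le r\}$, with $r=r(\epsilon,D)$ still to be chosen. The aim is to show that $F$ maps $B_r$ into $\ell^\infty(V)$ and is Lipschitz there, with a constant depending only on $\epsilon$ and $D$. The Picard–Lindelöf theorem for Banach-space ODEs then gives a solution on $[0,T_0]$ with $T_0=r/\sup_{B_r}\|F\|$. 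Since $|K_i|\le 2\pi+D\pi$, the sup is bounded in terms of $D$ alone, so $T_0$ depends only on $(\epsilon,D)$.

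The first step is to show that the deformed metric $u*d_0$ stays uniformly nondegenerate for $u\in B_r$. I will use the $\epsilon$-nondegeneracy and the degree bound together. Each vertex has at most $D$ incident angles, all at least $\epsilon$, so every angle is also at most $\pi-2\epsilon$. It follows that within each triangle the ratios of the $\sinh(d_{ij}/2)$ are bounded by a constant $C(\epsilon)$, by the hyperbolic sine law. The edge lengths themselves may still be unbounded, however. I therefore plan to express the angles through the standard formula
\[
\cos\theta_i^{jk}=\frac{\cosh d_{ij}\cosh d_{ik}-\cosh d_{jk}}{\sinh d_{ij}\sinh d_{ik}},
\]
or better through Bobenko–Pinkall–Springborn's parametrization in terms of the quantities $\sinh(\tilde d/2)=e^{(u_i+u_j)/2}\sinh(d/2)$. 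The claim I need is that perturbing $u$ by at most $r$ moves each angle by at most $\epsilon/2$, uniformly over all triangle shapes with angles in $[\epsilon,\pi-2\epsilon]$, including very large triangles. This is the main obstacle. My plan is to note that the set of triangles with angles in $[\epsilon,\pi-2\epsilon]$ is a compact family once the size parameter is allowed to go to infinity, with ideal-like limits. I then prove the estimate $|\partial\theta_i^{jk}/\partial u_l|\le C(\epsilon)$ on a neighborhood of this family. The explicit derivative formulas for hyperbolic vertex scaling (as in \cite{BPS,GGLSW}) should make this a direct computation, and the same bound also controls the angles after perturbation.

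Once the derivative bound $|\partial\theta_i^{jk}/\partial u_l|\le C(\epsilon)$ holds on $B_r$, each $K_i$ depends only on $u_i$ and its at most $D$ neighbours. This gives
\[
|F(u)_i-F(v)_i|\le D\cdot 3C(\epsilon)\,\|u-v\|_{\ell^\infty},
\]
so $F$ is Lipschitz on $B_r$ with constant $L(\epsilon,D)$, and Picard iteration converges on $[0,T_0]$. For smoothness in $t$, I note that each $K_i$ is a real-analytic function of finitely many $u_j$. Differentiating the equation repeatedly, and using uniform bounds on the higher derivatives of the angles (obtained by the same compactness argument), gives $u\in C^\infty_t$ at each vertex. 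This yields the claimed solution $u\in C_t^\infty(V\times[0,T_0])$.
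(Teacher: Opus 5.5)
Your route is genuinely different from the paper's, and it works once the key step is repaired as described below. The paper never proves a Lipschitz estimate on $\ell^\infty(V)$. It solves the flow on the finite subcomplexes $V_n$ with zero boundary values, and uses Lemma \ref{lem: zl} together with $|K_j|\le(2+D)\pi$ to keep $|u^{[n]}_j|\le\delta_0$, hence all angles $\ge\frac78\epsilon$, on $[0,T_0]$. It then bounds $d^2u^{[n]}_j/dt^2$ through \eqref{eq: dKdt} and extracts a limit by Arzel\`a--Ascoli and a diagonal argument.

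The analytic input is the same as yours: the bound \eqref{eq:2tan} on the $\tan$ terms is exactly a bound on $\partial\theta/\partial u$. The paper uses it only for equicontinuity, whereas you use it as a Lipschitz constant. Your version buys more. Since $|K_i|\le(2+D)\pi$ for every PH metric, any solution stays in $B_r$ on $[0,r/M]$. A Gronwall estimate for $\sup_i|u_i(t)-\hat u_i(t)|$ then gives uniqueness and Lipschitz dependence on the data from nondegeneracy alone, without the Delaunay hypothesis and maximum principle used for Theorem \ref{thm:uq}. The paper's compactness scheme, in turn, needs only a bounded continuous vector field. That is why it carries over to the extended flow in Section 5, where $\widetilde K$ is not Lipschitz (Remark \ref{remark: VP2}) and Picard iteration is unavailable.

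Your account of the ``main obstacle'' is wrong. Edge lengths of $\epsilon$-nondegenerate hyperbolic triangles are bounded: $\cosh d_{ij}=(\cos\theta_k+\cos\theta_i\cos\theta_j)/(\sin\theta_i\sin\theta_j)\le 2/\sin^2\epsilon$. The only non-compact direction is the angle sum tending to $\pi$, where the triangle shrinks to a point with a Euclidean limit, not an ideal one. The bound $\pi-2\epsilon$ also comes from the angle sum, not from the degree.

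No compactness is needed anyway. By Lemma \ref{Lem: VP}, $\partial\theta_i/\partial u_j=\frac{1}{2\cosh^2(d_{ij}/2)}\tan\frac{\theta_i+\theta_j-\theta_k}{2}$. By Remark \ref{remark: VP2}, $\partial\theta_i/\partial u_i=-\cosh d_{ij}\,\partial\theta_j/\partial u_i-\cosh d_{ik}\,\partial\theta_k/\partial u_i$, and $\cosh d/(2\cosh^2\frac d2)<1$. So if all angles are $\ge\epsilon'$, every partial derivative is at most $2\cot\epsilon'$, whatever the lengths.

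That bound holds only while the angles stay $\ge\epsilon/2$, so ``the same bound controls the angles after perturbation'' must be a first-exit argument along $s\mapsto su$. Let $s^*$ be the supremum of $s$ for which the triangle stays nondegenerate with angles $\ge\epsilon/2$ on $[0,s]$. On $[0,s^*)$ each angle moves by at most $6r\cot(\epsilon/2)<\epsilon/2$ for $r$ small. The extended angles of Lemma \ref{lemma:EXT} are continuous, and some extended angle vanishes on each $U_\alpha$. Hence $s^*u$ is nondegenerate with angles $>\epsilon/2$, which forces $s^*=1$.

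Finally, $C^\infty$ regularity in $t$ follows by bootstrapping at each vertex, since $K_i$ is analytic in finitely many $u_j$. No uniform higher-derivative bounds are needed.
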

Under stronger assumptions, we further obtain the uniqueness of solutions.
\begin{theorem}[Uniqueness]
	\label{thm:uq}
	Let $(M,\mathcal{T})$ be an infinitely triangulated surface with vertex degree bounded by $D$, and let $d_0$ be an initial PH metric on $\mathcal{T}$ which is $\epsilon$-uniformly nondegenerate and $\epsilon$-uniformly Delaunay.
	Let $u(t)$ and $\hat{u}(t)$ be two solutions of the flow \eqref{Eq: CYF} for $t\in[0,T]$, where $T\le T_0$ and $T_0$ is as in Theorem \ref{thm :ste}. Then $u\equiv\hat{u}$.
\end{theorem}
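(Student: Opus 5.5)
Let $w=u-\hat u$ and argue by a weighted energy (Gr\"onwall) estimate for the linearized flow. The difference satisfies
\[
\frac{dw_i}{dt}=-(K_i(u)-K_i(\hat u))=-\int_0^1 \sum_{j\sim i}\frac{\partial K_i}{\partial u_j}\big(su+(1-s)\hat u\big)\,ds\;w_j ,
\]
so everything rests on the structure of the Jacobian of curvature with respect to the conformal factor. In hyperbolic vertex scaling one has (following \cite{BPS,GGLSW})
\[
\frac{\partial K_i}{\partial u_j}=-\omega_{ij}\ (j\ne i),\qquad \frac{\partial K_i}{\partial u_i}=\sum_{j\sim i}\omega_{ij}+\lambda_i ,
\]
with $\lambda_i\ge0$ (strictly positive in the hyperbolic case). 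The edge weight $\omega_{ij}=\omega_{ji}$ is a sum over the two adjacent triangles of terms of the form $\tfrac12(\theta_i^{jk}+\theta_j^{ik}-\theta_k^{ij})$ multiplied by smooth bounded factors. Thus $-\partial K/\partial u$ is a weighted graph Laplacian plus a nonpositive zeroth-order term.

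The first step is a uniform control of the metrics along both solutions on $[0,T]$. Since $|K_i|\le 2\pi+D\pi$, each $u_i,\hat u_i$ satisfies $|u_i(t)|\le Ct$. For $T\le T_0(\epsilon,D)$ small (shrinking $T_0$ if needed, as in the construction of Theorem \ref{thm :ste}), angles depend continuously on the lengths and the lengths depend on $u$ with uniform Lipschitz bounds. Hence both metrics, and every convex combination $su+(1-s)\hat u$, stay $\epsilon/2$-uniformly nondegenerate and $\epsilon/2$-uniformly Delaunay. The key consequence is that all weights along the segment satisfy $0\le\omega_{ij}\le C(\epsilon,D)$ and the diagonal term is bounded, so the averaged operator $L$ is a bounded, symmetric operator on $\ell^2$ with nonnegative off-diagonal conductances.

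Next, I would localize, because $w$ need not be in $\ell^2(V)$; we only know $|w_i|\le Ct$. Fix a base vertex $o$ and the combinatorial distance $r(i)$, and use the weight $\phi_i=e^{-\alpha r(i)}$, which is summable because degree $\le D$ gives at most $D^n$ vertices at distance $n$ (take $\alpha>\log D$). Set $E(t)=\sum_i \phi_i w_i^2$, which is finite. Differentiating,
\[
E'(t)=-2\sum_i\phi_i w_i\Big(\sum_{j\sim i}\bar\omega_{ij}(w_i-w_j)+\bar\lambda_i w_i\Big).
\]
Summing by parts over edges, the symmetric part gives $-\sum_{ij}\bar\omega_{ij}\tfrac{\phi_i+\phi_j}{2}(w_i-w_j)^2\le0$, the $\lambda$ term is $\le0$, and a commutator term $\sum\bar\omega_{ij}(\phi_i-\phi_j)(w_i^2-w_j^2)/2$ remains. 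Since $|\phi_i-\phi_j|\le(e^\alpha-1)\min(\phi_i,\phi_j)e^{\alpha}$ for neighbors and the weights and degrees are bounded, the commutator term is bounded by $C(\epsilon,D,\alpha)E(t)$. Hence $E'\le CE$ with $E(0)=0$, and Gr\"onwall gives $E\equiv0$, that is, $u\equiv\hat u$. Alternatively, one can bypass the sign structure entirely: $|w_i'|\le C\sum_{j\sim i}|w_j|$ together with the weighted sup norm $\sup_i\phi_i|w_i|$ closes the estimate the same way, because bounded Jacobian entries plus bounded degree already make the right-hand side Lipschitz in a weighted $\ell^\infty$ space. In that version the Delaunay condition is used only to ensure that the flow stays in the regime where $K$ is smooth with uniformly bounded derivatives.

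The main obstacle is the first step: a quantitative, uniform bound showing that neither solution degenerates on $[0,T_0]$ and that the Jacobian entries are uniformly bounded. The difficulty is that the solutions are only assumed to exist on $[0,T]$, not to be the ones constructed in Theorem \ref{thm :ste}. I would handle this with the a priori estimate $|u_i(t)|\le (2+D)\pi t$, which holds for any solution as long as its triangles are nondegenerate. Combined with a continuity argument, the first time any angle drops to $\epsilon/2$ is then bounded below by a constant depending only on $\epsilon$ and $D$.
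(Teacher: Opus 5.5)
Your argument is correct, but it takes a different route from the paper. The paper also writes $g=u-\hat u$ as a solution of $g'=\Delta_{\omega}g+hg$ with averaged coefficients. It then invokes the Ge--Hua--Zhou maximum principle on infinite graphs \cite{GHZ}. That tool needs $\omega_{ij}\ge 0$ with bounded row sums and $h$ bounded above, and this is exactly where the $\tfrac14\epsilon$-Delaunay property of every interpolated metric $(su+(1-s)\hat u)*d_0$ is used.

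You instead close a Gr\"onwall inequality, either for the weighted energy $\sum_i e^{-\alpha r(i)}w_i^2$ or for a weighted sup norm. Your second version shows that the sign structure is irrelevant. Uniform nondegeneracy alone gives $|\omega_{ij}|<\cot(\tfrac78\epsilon)$ and $|\lambda_i|<2D\cot(\tfrac78\epsilon)$ by \eqref{eq:2tan}, \eqref{eq:cosh} and $\tanh^2<1$. Hence $u\mapsto -K(u)$ is Lipschitz on the $\delta_0$-ball of $\ell^\infty(V)$, and uniqueness is plain Gr\"onwall. Even the exponential weight is unnecessary: $w$ is bounded, and $m(t)=\sup_i|w_i(t)|$ satisfies $m(t)\le C(\epsilon,D)\int_0^t m(s)\,ds$.

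So, contrary to your remark, the Delaunay hypothesis plays no role at all in that version. Staying in the smooth regime with bounded Jacobian comes from nondegeneracy via Lemma~\ref{lem: zl}, and your route actually proves a stronger statement. The paper's maximum-principle route only needs one-sided control of $h$, so it would tolerate $h\to-\infty$ near degenerations. But it is tied to nonnegative weights, whereas in the present regime all coefficients are two-sided bounded anyway, so your argument is both more elementary and more general.

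Some minor corrections:
\begin{itemize}
\item Your ``main obstacle'' dissolves. A solution of \eqref{Eq: CYF} is by definition a family of PH metrics, so $|K_i|\le(2+D)\pi$ gives $\|u(t)\|_\infty,\|\hat u(t)\|_\infty\le(2+D)\pi T_0=\delta_0$ on all of $[0,T]$. No continuity argument is needed, and no shrinking of $T_0$ (which would weaken the statement).
\item The justification ``angles depend continuously on lengths, lengths are Lipschitz in $u$'' is not uniform over arbitrarily small triangles. You should cite Lemma~\ref{lem: zl} instead. It yields $\tfrac78\epsilon$-nondegeneracy and $\tfrac14\epsilon$-Delaunay (not $\tfrac12\epsilon$) for all interpolants, which suffices.
\item $\lambda_i\ge 0$ is not a general hyperbolic fact. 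It follows from $\omega_{ij}\ge0$ via $\lambda_i=\sum_{j\sim i}\omega_{ij}(\cosh d_{ij}-1)$ in the Delaunay regime.
\end{itemize}
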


The hyperbolic combinatorial Yamabe flow \eqref{Eq: CYF} on infinitely triangulated surfaces may develop singularities.
To handle the potential singularities along this flow,
we continuously extend the discrete curvature $K$ to a generalized curvature $\widetilde{K}$ and then extend the hyperbolic combinatorial Yamabe flow, which remains well-defined when the triangles degenerate.
Consequently, the solution to the extended hyperbolic combinatorial Yamabe flow exists for all time.

\begin{theorem}[Long-time existence of the extended flow]
	\label{thm:longtime}
	Let $(M,\mathcal{T})$ be an infinitely triangulated surface.
	There exists a global solution $u\in C_t^1(V\times[0,\infty))$ to the extended hyperbolic combinatorial Yamabe flow.
\end{theorem}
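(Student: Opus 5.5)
The plan is a compactness argument: solve finite truncations of the extended flow, then extract a convergent subsequence. It rests on two properties of the generalized curvature $\widetilde K$ that I expect the extension to provide, in the spirit of Luo and Bobenko--Pinkall--Springborn. First, for each triangle $\triangle ijk$ the extended angle $\widetilde\theta_i^{jk}$ is a continuous function of the three lengths $(\widetilde d_{ij},\widetilde d_{ik},\widetilde d_{jk})\in(0,\infty)^3$. It is the usual hyperbolic angle when the triangle inequalities hold, and it equals $\pi$ or $0$ when they fail. Second, the extended angles take values in $[0,\pi]$.

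Since $\sinh\frac{\widetilde d_{ij}}{2}=e^{\frac{u_i+u_j}{2}}\sinh\frac{d_{0,ij}}{2}$ gives positive lengths for every $u\in\mathbb R^V$, the function $\widetilde K_i(u)=2\pi-\sum_{\triangle ijk}\widetilde\theta_i^{jk}$ is defined for all $u$. By local finiteness it depends continuously on the finitely many values $u_i$ and $u_j$ with $j\sim i$. Moreover
\[
|\widetilde K_i(u)|\le C_i:=2\pi+\pi\deg(i)
\]
for all $u$. This a priori bound holds vertexwise and needs no bound on the degrees, which is consistent with the theorem assuming none.

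\textbf{Step 1 (finite approximations).} Take an exhaustion $V_1\subset V_2\subset\cdots$ of $V$ by finite sets. For each $n$, consider the finite-dimensional system
\[
\frac{du_i}{dt}=-\widetilde K_i(u)\quad (i\in V_n),\qquad u_j\equiv 0\quad (j\notin V_n),\qquad u_i(0)=0 .
\]
The right-hand side is continuous and bounded on $\mathbb R^{V_n}$. Peano's theorem gives a local solution, and boundedness rules out blow-up, so we obtain a $C^1$ solution $u^{(n)}$ on $[0,\infty)$. Uniqueness is not needed here.

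\textbf{Step 2 (compactness).} For every vertex $i$ and every $n$ with $i\in V_n$ we have $|u^{(n)}_i(t)|\le C_i t$ and $|u^{(n)}_i(t)-u^{(n)}_i(s)|\le C_i|t-s|$. For $i\notin V_n$, $u^{(n)}_i\equiv0$, so the same bounds hold trivially. Hence, for each fixed $i$ and each $T$, the family $\{u^{(n)}_i\}_n$ is uniformly bounded and equicontinuous on $[0,T]$. Since $V$ is countable, Arzel\`a--Ascoli together with a diagonal argument over $i\in V$ and $T=1,2,\dots$ yields a subsequence such that $u^{(n)}_i\to u_i$ locally uniformly on $[0,\infty)$ for every $i\in V$.

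\textbf{Step 3 (passing to the limit).} Fix $i$. For all $n$ large enough that $i$ and all its neighbours lie in $V_n$, we have
\[
u^{(n)}_i(t)=-\int_0^t\widetilde K_i(u^{(n)}(s))\,ds .
\]
The integrand depends only on the finitely many coordinates $u^{(n)}_j$ with $j=i$ or $j\sim i$. These converge uniformly on $[0,t]$, so by continuity of $\widetilde K_i$ the integrands converge pointwise. They are also bounded by $C_i$, so dominated convergence gives
\[
u_i(t)=-\int_0^t\widetilde K_i(u(s))\,ds .
\]
The integrand $s\mapsto\widetilde K_i(u(s))$ is continuous, so $u_i\in C^1$ with $u_i'=-\widetilde K_i(u)$ and $u_i(0)=0$. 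Thus $u\in C^1_t(V\times[0,\infty))$ is a global solution of the extended flow.

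\textbf{Main obstacle.} The real content lies in the continuity (and boundedness) of the extended angles across the degeneration locus where a triangle inequality becomes an equality. I would verify this directly from the hyperbolic law of cosines. As $\widetilde d_{ij}\to\widetilde d_{ik}+\widetilde d_{jk}$, the angle at $k$ tends to $\pi$ and the angles at $i$ and $j$ tend to $0$, which matches the constant extension beyond that locus. If the paper's extension of $\widetilde K$ is defined differently, this continuity check is the step that must be adapted. The rest is the standard finite-exhaustion argument, which goes through because each equation involves only finitely many unknowns.
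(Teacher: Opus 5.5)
Your proposal is correct and follows the paper's proof: finite exhaustion with zero boundary data, the vertexwise bound $|\widetilde K_i|\le(2+\deg(i))\pi$, Arzel\`a--Ascoli with a diagonal argument, and passage to the limit via dominated convergence and continuity of $\widetilde K$. The differences are minor. You pass to the limit in the integral equation and get $C^1$ from the fundamental theorem of calculus, where the paper uses the weak-derivative lemma. You also run the diagonal argument over $T=1,2,\dots$, which makes a single global-in-time limit explicit (the paper works on a fixed $[0,T]$) and correctly avoids using a degree bound.
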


The uniqueness of solutions to the extended hyperbolic combinatorial Yamabe flow on finitely triangulated surfaces follows from the convexity argument in \cite{GH, Xu1}.
In this paper, we extend this result to infinitely triangulated surfaces. In fact, we prove a stronger stability property of solutions to the extended flow under uniformly bounded vertex degrees and some integrability condition in Section 6, from which the uniqueness follows as a special case.

\begin{theorem}[Uniqueness of the extended flow]
	\label{thm:uniqueness}
	Let $(M,\mathcal{T})$ be an infinitely triangulated surface with vertex degrees uniformly bounded by $D$.
	Let $u(t),v(t)$ be two solutions to the extended hyperbolic combinatorial Yamabe flow satisfying $u(0)=v(0)$ and
	\begin{equation}\label{eq:l1-integrability}
		u-v \in L^1([0, T); \ell^1(V)), \forall T>0.
	\end{equation}
	Then $u(t)\equiv v(t)$ for all $t\ge0$.
\end{theorem}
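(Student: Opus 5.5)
The plan is to prove an $\ell^1$-contraction (stability) estimate and read uniqueness off it. The key structural input is the convexity argument of \cite{GH, Xu1}. For each face $\triangle ijk$, the extended angles $\widetilde\theta_i^{jk}(u_i,u_j,u_k)$ are continuous in $u$. They are the partial derivatives of a $C^1$ convex function $\widetilde F_{ijk}(u_i,u_j,u_k)$, obtained from the Bobenko--Pinkall--Springborn functional extended by constants across the degenerate region. Consequently the map $u\mapsto(\widetilde\theta_i,\widetilde\theta_j,\widetilde\theta_k)$ is monotone and Lipschitz with a universal constant. Each extended angle lies in $[0,\pi]$.

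I would use the Jacobian structure on the nondegenerate region, and its extension by continuity. The matrix $\partial\widetilde\theta/\partial u$ is symmetric. For hyperbolic vertex scaling its row sums satisfy $\sum_{j}\partial\theta_i/\partial u_j\le 0$, since the area equals $\pi-\sum\theta$ and increases as the triangle is scaled. Its off-diagonal entries satisfy $\partial\theta_i/\partial u_j$ of a definite sign up to the Delaunay-type correction. Rather than relying on sign conditions, I would use monotonicity directly.

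Set $w=u-v$ and fix a face $f=\triangle ijk$. By the mean value theorem along the segment from $v$ to $u$,
\[
\widetilde\theta^f(u)-\widetilde\theta^f(v)=A^f(t)\,w^f ,
\]
where $A^f=\int_0^1 D\widetilde\theta^f(v+s w)\,ds$ is symmetric negative semidefinite, with entries bounded by a universal constant $C_0$. The flow gives
\[
\frac{d}{dt}w_i=\sum_{f\ni i}\bigl(\widetilde\theta_i^f(u)-\widetilde\theta_i^f(v)\bigr).
\]

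For the $\ell^1$ estimate, I would multiply by $\mathrm{sgn}(w_i)$, or better by a smooth approximation $\phi_\delta'(w_i)$ of it. Summing over $i$, which is justified only on finite sets of vertices, gives a sum over faces of $\sum_{i\in f}\phi_\delta'(w_i)(A^fw^f)_i$. If $A^f$ had nonpositive row sums and nonnegative off-diagonals (a discrete Laplacian plus a nonpositive potential), each face term would be $\le 0$ by Kato's inequality. In general I only get a bound $C_0|w^f|$.

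The Delaunay sign is not available for the extended flow, so I would instead argue via the finite-speed structure and Gronwall with cutoffs. Take a finite exhaustion $B_R$, and cut off with $\eta_R$ supported near $B_R$. The sum over interior faces is then handled by convexity. Indeed $\sum_i \phi_\delta'(w_i)(A w)_i$ with $\phi_\delta'$ monotone in $w$, and $A$ negative semidefinite, is handled via the identity that $\langle \nabla \widetilde F(u)-\nabla\widetilde F(v),\,\psi(u-v)\rangle$ is nonnegative for monotone $\psi$ applied componentwise when $\widetilde F$ is a sum of functions of the forms in question. This is the point I would verify first, using the explicit form $A^f=$ (negative semidefinite with zero-or-negative row sums) from the hyperbolic structure.

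The boundary faces, those meeting $\mathrm{supp}\,\nabla\eta_R$, contribute at most
\[
C_0 D\sum_{i\in \partial B_R}\bigl(|w_i|+\textstyle\sum_{j\sim i}|w_j|\bigr).
\]
Here the bounded degree $D$ is used. Integrating in time, this boundary term is bounded by $C\int_0^T\sum_{i\notin B_{R-1}}|w_i|\,dt$. That quantity tends to $0$ as $R\to\infty$ precisely by the hypothesis \eqref{eq:l1-integrability}, since $w\in L^1([0,T);\ell^1(V))$.

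Thus $\sum_{i\in B_R}|w_i(t)|\le \sum|w_i(0)| + o(1)$. Letting $R\to\infty$ gives $\|w(t)\|_{\ell^1}\le\|w(0)\|_{\ell^1}=0$, so $u\equiv v$.

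The main obstacle is the sign of the interior face terms, that is, an $\ell^1$-accretivity of the extended curvature map. If the componentwise-monotone inequality fails, I would fall back on an $\ell^2$ version, $\frac{d}{dt}\sum\eta_R w_i^2 = 2\sum_f \eta_R\, w^f\cdot A^fw^f+\text{boundary}$. This uses only negative semidefiniteness. The boundary terms are then controlled by $\sup|w|$ (which is bounded on $[0,T]$, since $|\dot w|\le 4\pi D$) times the $\ell^1$ tail, and that again vanishes by \eqref{eq:l1-integrability}.
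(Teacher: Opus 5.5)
Your primary $\ell^1$ route has a genuine gap, and it rests on a false regularity claim.

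\emph{The extended angles are not Lipschitz.} Remark \ref{remark: VP2} shows $\partial\theta_i^{jk}/\partial u_j\to+\infty$ and $\partial\theta_i^{jk}/\partial u_i\to-\infty$ as one approaches $\partial U_i$. So no universal $C_0$ bounds the entries of $A^f$. The mean-value matrix itself is delicate, since $\widetilde\theta^f$ is not differentiable on $\partial U_\alpha$.

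\emph{The componentwise inequality does not follow from your structural assumptions.} You want $\sum_i\operatorname{sgn}(w_i)\,(Aw)_i\le 0$, but negative semidefiniteness with nonpositive row sums does not give it. Take $A=-vv^{T}$ with $v=(1,1,1)^{T}$ and $w=(1,1,-3)^{T}$. Then $Aw=(1,1,1)^{T}$ and $\sum_i\operatorname{sgn}(w_i)(Aw)_i=1>0$. A Kato-type $\ell^1$ contraction needs nonnegative off-diagonal weights, $\partial\theta_i^{jk}/\partial u_j\ge 0$. By \eqref{Eq: VP1} this is a Delaunay-type condition, which is exactly what the extended flow lacks.

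\emph{The row-sum claim is also wrong.} By symmetry, the $i$-th row sum of the face Jacobian is $\partial(\theta_i+\theta_j+\theta_k)/\partial u_i=-\partial\,\mathrm{Area}/\partial u_i$. The area is not monotone in a single $u_i$: raising $u_i$ can push the triangle toward the degeneration $U_j$, where the area tends to $0$. So $\|w(t)\|_{\ell^1}\le\|w(0)\|_{\ell^1}$ is not established.

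\textbf{The $\ell^2$ fallback works once you drop $A^f$ and $C_0$.} As written, your boundary bound ($\sup|w|$ times the tail) still leans on $C_0$.
For a face $f\subset B_R$, the inequality $w^f\cdot\bigl(\widetilde\theta^f(u)-\widetilde\theta^f(v)\bigr)\le 0$ follows directly from concavity of the $C^1$ function $\widetilde{\mathcal E}_{ijk}$, whose gradient is $\widetilde\theta^f$.
For a face meeting both $B_R$ and its complement, its vertices in $B_R$ lie in $B_R\setminus B_{R-1}$. Each such term is at most $\pi|w_i|$, because extended angles lie in $[0,\pi]$. Hence
\[
\frac{d}{dt}\sum_{i\in B_R}w_i^2\le 2\pi D\sum_{i\in B_R\setminus B_{R-1}}|w_i| .
\]
With $w(0)=0$, for $t\in[0,T)$ this gives $\sum_{i\in B_R}w_i(t)^2\le 2\pi D\int_0^T\sum_{i\notin B_{R-1}}|w_i|\,ds$. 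The left side is nondecreasing in $R$, and the right side tends to $0$ by \eqref{eq:l1-integrability}. Therefore $w(t)=0$.

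\textbf{Comparison with the paper.} The repaired argument uses the same ingredients as the paper: the energy $\sum_i(u_i-v_i)^2$, convexity of the extended Bobenko--Pinkall--Springborn functional, and bounded degree plus $\ell^1$ summability to kill boundary terms. The bookkeeping differs.
\begin{itemize}
\item The paper applies convexity of $\widetilde{\mathcal E}_n$ on whole subcomplexes at a fixed time. It uses $\|w(t)\|_{\ell^1}<\infty$ to reach the global inequality $\sum_{i\in V}(\widetilde K_i(u)-\widetilde K_i(v))(u_i-v_i)\ge 0$. It then differentiates the infinite sum $E(t)$ via Lemma \ref{lem:ac-series-diff}.
\item Your localized version differentiates only finite sums, so no interchange lemma is needed. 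The space-time hypothesis enters directly through the integrated boundary flux, not through $\ell^1$-finiteness at each time. The same computation gives the stability estimate of Theorem \ref{thm:s} verbatim.
\item In exchange, the paper's route yields a pointwise-in-time global monotonicity inequality for the extended curvature.
\end{itemize}
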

Theorems \ref{thm :ste}, \ref{thm:uq} and \ref{thm:longtime} serve as the hyperbolic counterpart to the corresponding conclusions for the Euclidean case \cite{JI}, and Theorem \ref{thm:uniqueness} is established for the first time in the hyperbolic case, while such a uniqueness result is not proved in \cite{JI} for the Euclidean case.
Compared with the existing results for Euclidean combinatorial Yamabe flow \cite{JI}, our main contributions are threefold.
Firstly, we remove the $\epsilon$-uniformly Delaunay initial metric condition in the short-time existence theory, requiring only $\epsilon$-uniformly nondegenerate initial metric together with uniformly bounded vertex degree, thereby weakening the geometric constraints.
Secondly, we dispense with the $\epsilon$-uniformly Delaunay assumption imposed on difference metrics in the previous uniqueness arguments for the Euclidean combinatorial Yamabe flow.
Thirdly, we establish a stability property of solutions to the extended hyperbolic combinatorial Yamabe flow under uniformly bounded vertex degree and some integrability condition, from which the uniqueness follows as a special case.

Combining Theorem \ref{thm :ste} and Theorem \ref{thm:uq}, we establish the local well-posedness for hyperbolic combinatorial Yamabe flows defined on infinitely triangulated surfaces. Nevertheless, local solutions can be extended forward in time provided that suitable corresponding conditions are satisfied. Each continuation step requires updated refined nondegeneracy conditions as well as refined Delaunay conditions with tighter angular restrictions. As a result, both the existence interval and the uniqueness interval shrink gradually after each extension iteration. Such successive interval contraction prevents one from deriving global solutions directly via standard local extension procedures.
For finitely triangulated surfaces, a standard technique to obtain global existence and convergence is to perform surgery along the flow by edge flipping, which preserves the Delaunay condition throughout the evolution. This approach has been successfully applied to the combinatorial Yamabe flow \cite{GGLSW,GLSW}, and the combinatorial Calabi flow for Luo's vertex scaling \cite{ZXu}, and subsequently to the inversive distance circle packing setting \cite{XZ3,XZ4}. Under Delaunay or weighted Delaunay conditions\cite{BL1,BL2}, the solution with surgery exists for all time and converges exponentially fast for any initial metric on the polyhedral surface.
However, this surgery technique relies essentially on a finitely triangulated surface, since only finitely many edge flips can occur in the finite case, and thus cannot be directly extended to the infinite setting considered here for the moment.

The paper is organized as follows. We present preliminary knowledge including the variational principles and curvature evolution equations concerning hyperbolic vertex scalings and the extended hyperbolic combinatorial Yamabe flow in Section 2. In Sections 3 and 4, we establish respectively the short-time existence and uniqueness of solutions to the hyperbolic combinatorial Yamabe flow on infinitely triangulated surfaces.
Section 5 is devoted to the long-time existence of solutions to the extended flow on infinitely triangulated surfaces, while in Section 6 we prove a stability property of solutions to the extended flow on infinitely triangulated surfaces, which in particular implies the uniqueness of solutions.

\section*{Acknowledgements}
The research of the second author is supported by National Natural Science Foundation
of China under grant no. 12471057 and the Fundamental Research Funds for the Central Uni-
versities under grant no. 2042020kf0199.

\section{preliminaries}
\subsection{Variational principles}
To derive the curvature evolution along the hyperbolic combinatorial Yamabe flow, we now recall the variational principle for interior angles under hyperbolic vertex scaling.

\begin{lemma}\label{Lem: VP}
	\cite{GY,XZ,XZ2}
	Let $\triangle ijk$ be a hyperbolic triangle with a PH metric $d_0$, and let $u$ be a discrete conformal factor for the triangle. Denote $d_{st}:=(u*d_0)_{st}$ as the edge length of $\{st\}$ in $\triangle ijk$ and  $\theta^{jk}_{i},\theta^{ik}_{j},\theta^{ij}_{k}$
	as the inner angles opposite to $\{jk\}$, $\{ik\}$, $\{ij\}$ respectively. Then
	\begin{equation}\label{Eq: VP1}
		\frac{\partial \theta^{jk}_i }{\partial u_j} =\frac{\partial \theta^{ik}_j }{\partial u_i} =\frac{\cosh d_{jk}+\cosh d_{ik}-\cosh d_{ij}-1}{A(\cosh d_{ij}+1)} =\frac{1}{2\cosh^2\frac{d_{ij}}{2}} \tan\frac{\theta^{jk}_i+\theta^{ik}_j-\theta^{ij}_k }{2},
	\end{equation}
	where $A=\sinh d_{ik}\sinh d_{ij}\sin\theta^{jk}_i$.
\end{lemma}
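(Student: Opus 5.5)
The plan is a direct computation in a single hyperbolic triangle. I separate the geometric dependence on $u$ from the trigonometry by treating the angles as functions of the three side lengths and applying the chain rule through $d=u*d_0$. From Definition \ref{Def: HVS}, $\sinh\frac{d_{st}}{2}=e^{\frac{u_s+u_t}{2}}\sinh\frac{(d_0)_{st}}{2}$. Differentiating in $u_s$ gives
\[
\tfrac12\cosh\tfrac{d_{st}}{2}\,\partial_{u_s}d_{st}=\tfrac12\sinh\tfrac{d_{st}}{2},
\]
so $\partial_{u_s}d_{st}=\tanh\frac{d_{st}}{2}$. Moreover, $u_j$ changes only $d_{ij}$ and $d_{jk}$, and leaves $d_{ik}$ fixed.

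Next I need the partial derivatives of the angle with respect to the sides. These come from the hyperbolic cosine law, $\cos\theta_i^{jk}=\frac{\cosh d_{ij}\cosh d_{ik}-\cosh d_{jk}}{\sinh d_{ij}\sinh d_{ik}}$, and give the standard formulas
\[
\frac{\partial\theta_i}{\partial d_{jk}}=\frac{\sinh d_{jk}}{A},\qquad \frac{\partial\theta_i}{\partial d_{ij}}=-\frac{\sinh d_{jk}\cos\theta_j}{A},
\]
where $A=\sinh d_{ij}\sinh d_{ik}\sin\theta_i$. This $A$ is symmetric in the vertices by the sine law. Combining the two steps,
\[
\frac{\partial\theta_i}{\partial u_j}=\frac{\sinh d_{jk}}{A}\Bigl(\tanh\tfrac{d_{jk}}{2}-\cos\theta_j\tanh\tfrac{d_{ij}}{2}\Bigr).
\]
I then substitute $\tanh\frac{x}{2}=\frac{\cosh x-1}{\sinh x}$ and the cosine law for $\cos\theta_j$, and clear denominators. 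Using $\sinh^2=\cosh^2-1$, the expression should reduce to $\frac{\cosh d_{jk}+\cosh d_{ik}-\cosh d_{ij}-1}{A(\cosh d_{ij}+1)}$. This is the routine but messy algebra, and I would organize it by setting $a=\cosh d_{jk}$, $b=\cosh d_{ik}$, $c=\cosh d_{ij}$. The result is visibly symmetric in $i\leftrightarrow j$, since swapping $i$ and $j$ swaps $d_{jk}$ and $d_{ik}$ and fixes $d_{ij}$. This proves the first equality.

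For the tangent form I would use the hyperbolic half-angle identities. The goal is
\[
\tan\frac{\theta_i+\theta_j-\theta_k}{2}=\frac{2\cosh^2\frac{d_{ij}}{2}\,(a+b-c-1)}{A(c+1)}=\frac{a+b-c-1}{A}.
\]
The cleanest route is to write $\theta_i+\theta_j-\theta_k=\pi-\operatorname{Area}-2\theta_k$. Then
\[
\tan\frac{\theta_i+\theta_j-\theta_k}{2}=\cot\Bigl(\theta_k+\frac{\operatorname{Area}}{2}\Bigr),
\]
and the area enters through the known formula $\tan\frac{\operatorname{Area}}{2}=\frac{A}{1+a+b+c}$; note that $A$ is twice the quantity usually appearing there, so the constant needs checking. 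Expanding $\cot(x+y)$ with $\cot\theta_k=\frac{\cos\theta_k}{\sin\theta_k}$, where $\sin\theta_k=\frac{A}{\sinh d_{ik}\sinh d_{jk}}$, and reducing via the cosine law should yield $\frac{a+b-c-1}{A}$.

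The main obstacle is this last trigonometric reduction, where the constants and signs are easy to get wrong. Failing the elegant route, I would fall back on brute force: express everything in $a,b,c$ and use the identity $A^2=1-a^2-b^2-c^2+2abc$ to compare both sides.
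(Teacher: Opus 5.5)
The paper does not prove this lemma; it cites it from \cite{GY,XZ,XZ2}. Your chain-rule computation is therefore a self-contained replacement for that citation, and it is correct. Both steps you flagged as risky go through cleanly.

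For the first equality, $\sinh d_{jk}\tanh\frac{d_{jk}}{2}=a-1$ and $\sinh d_{jk}\cos\theta_j\tanh\frac{d_{ij}}{2}=\frac{(ac-b)(c-1)}{c^2-1}=\frac{ac-b}{c+1}$. Hence
\[
\frac{\partial\theta_i}{\partial u_j}=\frac{1}{A}\Bigl(a-1-\frac{ac-b}{c+1}\Bigr)=\frac{a+b-c-1}{A(c+1)},
\]
which is a one-line simplification rather than messy algebra.

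For the tangent form, $\tan\frac{\mathrm{Area}}{2}=\frac{A}{1+a+b+c}$ holds with your $A$ exactly as defined. Indeed $A^2=1-a^2-b^2-c^2+2abc=4\sinh s\,\sinh(s-d_{ij})\sinh(s-d_{ik})\sinh(s-d_{jk})$, with $s$ the semiperimeter, so the factor of two you worried about is already absorbed. With $\cot\theta_k=\frac{ab-c}{A}$ you get
\[
\cot\Bigl(\theta_k+\frac{\mathrm{Area}}{2}\Bigr)=\frac{(ab-c)(1+a+b+c)-A^2}{A(a+1)(b+1)}.
\]
After substituting $A^2=1-a^2-b^2-c^2+2abc$, the numerator factors as $(a+b-c-1)(a+1)(b+1)$, which gives exactly $\frac{a+b-c-1}{A}$. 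The cotangent addition formula is legitimate here because $\theta_k+\frac{\mathrm{Area}}{2}=\frac{\pi+\theta_k-\theta_i-\theta_j}{2}\in(0,\pi)$.

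Compared with the bare citation, your route makes the structure of the formula visible. The tangent equals $\cot\frac{\pi+\theta_k-\theta_i-\theta_j}{2}$, which reduces to the Euclidean weight $\cot\theta_k$ when the angle sum is $\pi$. This tangent form is also the one the paper actually relies on, in \eqref{eq:2tan} and in Remark \ref{remark: udun}.
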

We have the following curvature evolution equation along the hyperbolic combinatorial Yamabe flow.
\begin{proposition}\label{prop: CEQ}
	Let $(M,\mathcal{T})$ be an infinitely triangulated surface with a PH metric $d_0$. Let $u: V\to \mathbb{R}$ be a function, and set $d:=u*d_0$.
	Along the hyperbolic combinatorial Yamabe flow, we have the following formula:
	
	\begin{equation}\label{Eq: CEQ}
		\begin{aligned}
			\frac{dK_i}{dt}
			&= \sum_{j\sim i} \left( \frac{\partial \theta^{jk_1}_i}{\partial u_j}
			+ \frac{\partial \theta^{jk_2}_i}{\partial u_j} \right) (K_j - K_i)
			- \sum_{\triangle ijk\in F} \frac{\partial \mathrm{Area}(\triangle ijk)}{\partial u_i} K_i.
		\end{aligned}
	\end{equation}
Here $\triangle ijk_1,\triangle ijk_2\in F$ denote the two triangles adjacent to the edge $\{ij\}$.
\end{proposition}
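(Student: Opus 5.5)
We differentiate $K_i=2\pi-\sum_{\triangle ijk\in F}\theta_i^{jk}$ along the flow using the chain rule. Since $\mathcal{T}$ is locally finite, only finitely many triangles contain $i$, and each angle $\theta_i^{jk}$ depends smoothly on $u_i,u_j,u_k$ only. Hence
\[
\frac{dK_i}{dt}=-\sum_{\triangle ijk\in F}\Bigl(\frac{\partial\theta_i^{jk}}{\partial u_i}\frac{du_i}{dt}+\frac{\partial\theta_i^{jk}}{\partial u_j}\frac{du_j}{dt}+\frac{\partial\theta_i^{jk}}{\partial u_k}\frac{du_k}{dt}\Bigr),
\]
and substituting $du_s/dt=-K_s$ makes every term finite and explicit.

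First we regroup the cross terms. In each triangle $\triangle ijk$ containing $i$, the neighbors $j$ and $k$ each contribute one term. Each edge $\{ij\}$ lies in exactly two triangles $\triangle ijk_1,\triangle ijk_2$, since the triangulation has no boundary. Summing over triangles therefore equals summing over neighbors $j\sim i$, and we obtain
\[
\sum_{j\sim i}\Bigl(\frac{\partial\theta_i^{jk_1}}{\partial u_j}+\frac{\partial\theta_i^{jk_2}}{\partial u_j}\Bigr)K_j.
\]

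Next we treat the diagonal term $\sum\frac{\partial\theta_i^{jk}}{\partial u_i}K_i$. The key identity comes from Gauss--Bonnet in a hyperbolic triangle: $\mathrm{Area}(\triangle ijk)=\pi-\theta_i^{jk}-\theta_j^{ik}-\theta_k^{ij}$. Differentiating in $u_i$ gives
\[
\frac{\partial\theta_i^{jk}}{\partial u_i}=-\frac{\partial\mathrm{Area}}{\partial u_i}-\frac{\partial\theta_j^{ik}}{\partial u_i}-\frac{\partial\theta_k^{ij}}{\partial u_i}.
\]
By the symmetry in Lemma \ref{Lem: VP}, $\frac{\partial\theta_j^{ik}}{\partial u_i}=\frac{\partial\theta_i^{jk}}{\partial u_j}$, and likewise for $k$. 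Hence
\[
\frac{\partial\theta_i^{jk}}{\partial u_i}=-\frac{\partial\mathrm{Area}}{\partial u_i}-\frac{\partial\theta_i^{jk}}{\partial u_j}-\frac{\partial\theta_i^{jk}}{\partial u_k}.
\]
The same edge regrouping as before turns the diagonal contribution into
\[
-\sum_{j\sim i}\Bigl(\frac{\partial\theta_i^{jk_1}}{\partial u_j}+\frac{\partial\theta_i^{jk_2}}{\partial u_j}\Bigr)K_i-\sum_{\triangle ijk}\frac{\partial\mathrm{Area}}{\partial u_i}K_i.
\]

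Finally, adding the cross terms and the diagonal terms yields \eqref{Eq: CEQ}.

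The main point needing care is the Gauss--Bonnet step together with the symmetry of Lemma \ref{Lem: VP}. Sign bookkeeping also needs attention, since $K$ carries a minus sign relative to the angles and $du/dt=-K$. No convergence issue arises, because all sums are finite by local finiteness.
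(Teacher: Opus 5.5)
Your proof is correct and is essentially the standard argument the paper defers to: differentiate $K_i$ by the chain rule, regroup the off-diagonal terms edge by edge using the two triangles on each edge, and rewrite $\partial\theta_i^{jk}/\partial u_i$ via Gauss--Bonnet together with the symmetry of Lemma~\ref{Lem: VP}. This is the same identity the paper uses in Remark~\ref{remark: VP2} and behind Remark~\ref{remark: VP}, and your sign bookkeeping and appeal to local finiteness are both handled correctly.
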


  The proof is identical to that in \cite[Proposition 3.2]{BL}, so we omit the details.

\begin{remark}\label{remark: darea}
We recall a fundamental variation formula for the areas of hyperbolic triangles, established by Glickenstein and Thomas \cite[Proposition 9]{GT}:
	\begin{equation}\label{eq:ddarea}
		\frac{\partial \mathrm{Area} (\triangle ijk) }{\partial u_k}
		=\frac{\partial\theta ^{jk}_i}{\partial u_k}\big(\cosh d_{ik}-1\big)
		+\frac{\partial\theta ^{ik}_j}{\partial u_k}\big(\cosh d_{jk}-1\big).
	\end{equation}
	Summing over all triangular faces $\triangle ijk\in F$ and using the variational formula\eqref{Eq: VP1}, we obtain

\begin{equation*}
	\begin{aligned}
		&\sum_{\triangle ijk\in F } \frac{\partial \mathrm{Area} (\triangle ijk) }{\partial u_i}\\
		&\quad=\sum_{\triangle ijk\in F }\left[  \frac{\partial\theta ^{jk}_i}{\partial u_j}\big(\cosh d_{ij}-1\big)
		+\frac{\partial\theta ^{jk}_i}{\partial u_k}\big(\cosh d_{ik}-1\big)  \right]\\
		&\quad=\sum_{j\sim i }\left(\frac{\partial \theta^{jk_1}_i }{\partial u_j}
		+\frac{\partial \theta^{jk_2}_i }{\partial u_j}\right)\big(\cosh d_{ij}-1\big)\\
		&\quad=\sum_{j\sim i }\frac{1}{2\cosh^2\frac{d_{ij}}{2}}
		\left(\tan\frac{\theta^{jk_1}_i+\theta^{ik_1}_j-\theta^{ij}_{k_1}}{2}
		+\tan\frac{\theta^{jk_2}_i+\theta^{ik_2}_j-\theta^{ij}_{k_2} }{2}\right)
		\big(\cosh d_{ij}-1\big)\\
		&\quad=\sum_{j\sim i }\tanh^2\frac{d_{ij}}{2}\left(\tan\frac{\theta^{jk_1}_i+\theta^{ik_1}_j-\theta^{ij}_{k_1}}{2}
		+\tan\frac{\theta^{jk_2}_i+\theta^{ik_2}_j-\theta^{ij}_{k_2} }{2}\right),
	\end{aligned}
\end{equation*}
where $\triangle ijk_1,\triangle ijk_2\in F$ are adjacent triangles sharing a common edge $\{ij\}$.

Combining the curvature evolution equation along the hyperbolic combinatorial Yamabe flow \eqref{Eq: CEQ}, we obtain
\begin{equation}\label{eq: dKdt}
	\begin{aligned}
		\frac{dK_i}{dt}
		&= \sum_{j\sim i}\frac{1}{2\cosh^2\frac{d_{ij}}{2}} \Biggl( \tan\frac{\theta^{jk_1}_i + \theta^{ik_1}_j - \theta^{ij}_{k_1}}{2}
		+ \tan\frac{\theta^{jk_2}_i + \theta^{ik_2}_j - \theta^{ij}_{k_2}}{2} \Biggr) (K_j - K_i) \\
		&\qquad - \sum_{j\sim i}\tanh^2\frac{d_{ij}}{2} \Biggl( \tan\frac{\theta^{jk_1}_i + \theta^{ik_1}_j - \theta^{ij}_{k_1}}{2}
		+ \tan\frac{\theta^{jk_2}_i + \theta^{ik_2}_j - \theta^{ij}_{k_2}}{2} \Biggr) K_i.
	\end{aligned}
\end{equation}
\end{remark}

\begin{remark}\label{remark: VP}
	By the definition of combinatorial curvature and an argument analogous to Proposition \ref{prop: CEQ}, we obtain
 \begin{align*}
\frac{\partial K_i}{\partial u_j}=\frac{\partial K_j}{\partial u_i}=- \left(\frac{\partial \theta^{jk_1}_i }{\partial u_j}+\frac{\partial \theta^{jk_2}_i }{\partial u_j} \right),
\end{align*}
 for two adjacent vertices $i$ and $j$ in a triangulation $\mathcal{T}$ with a PH metric. We also have
	\begin{align*}
	\frac{\partial K_i}{\partial u_i}=&\sum_{j\sim i} \left(\frac{\partial \theta^{jk_1}_i }{\partial u_j}+\frac{\partial \theta^{jk_2}_i }{\partial u_j}\right )+\sum_{\triangle ijk\in F}\frac{\partial \mathrm{Area}(\triangle ijk)}{\partial u_i}
\end{align*}
for adjacent triangles $\triangle ijk_1,\triangle ijk_2\in F$.
\end{remark}

\subsection{Admissible space and extension of the flow.}\label{section: EF}
For general initial values, the hyperbolic combinatorial Yamabe flow \eqref{Eq: CYF} may develop singularities, which correspond to triangles in the triangulation becoming degenerate or the discrete conformal factor $u$ tending to infinity along the flow. For the hyperbolic combinatorial Yamabe flow, we can extend the flow across singularities to guarantee the long-time existence of solutions for general initial values. We first introduce the admissible space of PH metrics and the following extension lemma. See \cite{BPS,XZ} for more details.

Denote the admissible space of PH metrics for a triangle $ \triangle ijk\in F$ as $\Omega_{ijk}^H(d_0)$ , i.e.
\begin{align*}
	\Omega_{ijk}^H(d_0) =\{(u_i,u_j,u_k)\in \mathbb{R}^3|(u*d_0)_{rs}+(u*d_0)_{rt}>(u*d_0)_{ts}, \{r,s,t\}=\{i,j,k\}\}.
\end{align*}

\begin{theorem}\label{thm: admiss}
	\cite{BPS,XZ}
	Given any initial nondegenerate hyperbolic discrete metric $d_0$ on $\triangle ijk$, the admissible space $\Omega_{ijk}^H(d_0)$ of hyperbolic discrete conformal factors $(u_i, u_j, u_k) \in \mathbb{R}^3$ for the triangle $\triangle ijk$ is
	\[
	\Omega_{ijk}^H(d_0) = \mathbb{R}^3 \setminus \cup_{\alpha\in\Lambda} U_\alpha,
	\]
	where $\Lambda = \{i,j,k\}$. $\cup_{\alpha\in\Lambda} U_\alpha$ is a disjoint union of
	
	\[
	U_i = \biggl\{ (u_i,u_j,u_k)\in\mathbb R^3 \,\bigg|\,\xi_i \ge \frac{-B_i+\sqrt{\Delta_i}}{2A_i} \biggr\}
	\]
	with
	\begin{align*}
		A_i &= S_i^4 > 0, \\
		B_i &= -2 S_i^2 (S_j^2\xi_j+S_k^2\xi_k)<0,\\
		\Delta _i&=16S^4_iS^2_jS^2_k\xi_j\xi_k+16S^6_iS^2_jS^2_k>0,
	\end{align*}
	where $S_i=\sinh\frac{(d_{0})_{jk}}{2}$, $\xi_i=e^{-u_i}$.
	As a result, $\Omega_{ijk}^H(d_0)$ is a non-empty and simply connected subset of $\mathbb{R}^3$ with analytic boundary.
\end{theorem}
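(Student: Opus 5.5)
Working in the coordinates $\xi_r=e^{-u_r}$, I will reduce the statement to an explicit computation of when one triangle inequality fails. Write $S_r=\sinh\frac{(d_0)_{st}}{2}$ for $\{r,s,t\}=\{i,j,k\}$. Then the new edge lengths satisfy $\sinh\frac{d_{jk}}{2}=S_i/\sqrt{\xi_j\xi_k}$, and cyclically for the other two edges. The first step is to rewrite the strict triangle inequality $d_{ij}+d_{ik}>d_{jk}$ as a polynomial inequality. Set $x=\sinh\frac{d_{ij}}{2}$, $y=\sinh\frac{d_{ik}}{2}$, $z=\sinh\frac{d_{jk}}{2}$. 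Using $\cosh d=1+2\sinh^2\frac d2$, the inequality $d_{jk}<d_{ij}+d_{ik}$ is equivalent to
\[
z<x\sqrt{1+y^2}+y\sqrt{1+x^2},
\]
which follows from the addition formula for $\sinh$ at half lengths. Squaring and isolating the remaining radical gives a quartic condition in $x,y,z$. After substituting $x=S_k/\sqrt{\xi_i\xi_j}$ and its analogues and clearing the denominators $\xi_i\xi_j\xi_k$, this should become a quadratic inequality in $\xi_i$, with coefficients depending on $\xi_j,\xi_k$. Concretely, the failure region should take the form $A_i\xi_i^2+B_i\xi_i+C_i\ge 0$ on the branch $\xi_i$ large, with $A_i=S_i^4$, $B_i=-2S_i^2(S_j^2\xi_j+S_k^2\xi_k)$ and $C_i=(S_j^2\xi_j-S_k^2\xi_k)^2-4S_j^2S_k^2\xi_j\xi_k$ (or similar). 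The discriminant is then $\Delta_i=B_i^2-4A_iC_i$, and expanding should produce exactly $16S_i^4S_j^2S_k^2(\xi_j\xi_k+S_i^2)$, matching the stated $\Delta_i$. The main obstacle is carrying out this algebra correctly and keeping track of the squaring steps: I must check at each squaring that both sides are nonnegative, and that the extraneous branch corresponds to the other root. That other root should be excluded because at the smaller root the violated inequality is a different one.

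Next I will show that the three sets $U_i,U_j,U_k$ are pairwise disjoint. Suppose $(u_i,u_j,u_k)$ lay in both $U_i$ and $U_j$. Then $d_{jk}\ge d_{ij}+d_{ik}$ and $d_{ik}\ge d_{ij}+d_{jk}$ would both hold, which forces $d_{ij}\le 0$. This is impossible, since every edge length is positive for every $u$. The same argument handles the other pairs, so the complement of the union is precisely the set where all three strict inequalities hold. This gives $\Omega^H_{ijk}(d_0)=\mathbb R^3\setminus\bigcup_\alpha U_\alpha$. Non-emptiness is immediate because $u=0$ gives $d_0$, which is nondegenerate.

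For simple connectivity and analytic boundary, I will describe each $U_i$ as an epigraph. In the coordinates $(\xi_i,\xi_j,\xi_k)\in\mathbb R_{>0}^3$, the set $U_i$ is $\{\xi_i\ge f_i(\xi_j,\xi_k)\}$, where $f_i=\frac{-B_i+\sqrt{\Delta_i}}{2A_i}$ is real analytic and positive because $\Delta_i>0$. Its boundary is therefore an analytic graph, and $U_i$ is a closed set that retracts onto this graph along the $\xi_i$ direction. The complement $\Omega$ can then be deformation retracted, for instance by flowing along the diagonal direction $u\mapsto u+s(1,1,1)$ or by using the convex region near $u=0$. More simply, I will argue that the three closed regions are disjoint epigraphs over different coordinate directions, each homeomorphic to a half-space with boundary a graph. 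Removing such a region attached at infinity preserves simple connectivity, and this can be checked by an explicit retraction of $\Omega$ onto a neighborhood of the line $\{u_i=u_j=u_k\}$ intersected with $\Omega$. This topological step is the part I would treat most briefly, and I would cite \cite{BPS,XZ} for it if the explicit retraction becomes messy.
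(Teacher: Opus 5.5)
The paper gives no proof of this statement; it only cites \cite{BPS,XZ}. Your reduction to a quadratic in $\xi_i$ is the natural way to produce the displayed $A_i,B_i,\Delta_i$ and is sound in outline, but two steps need repair.

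\textbf{The quadratic and the choice of root.} Your constant term is wrong. With $x,y,z$ as you define them, failure of $d_{jk}<d_{ij}+d_{ik}$ after two squarings reads
\[
Q:=(z^2-x^2-y^2)^2-4x^2y^2(1+z^2)=P^2-4x^2y^2(1+x^2)(1+y^2)\ge 0,\qquad P:=z^2-x^2-y^2-2x^2y^2 .
\]
Clearing denominators gives $(\xi_i\xi_j\xi_k)^2Q=(S_i^2\xi_i-S_j^2\xi_j-S_k^2\xi_k)^2-4S_j^2S_k^2\xi_j\xi_k-4S_i^2S_j^2S_k^2$. So $C_i=(S_j^2\xi_j-S_k^2\xi_k)^2-4S_i^2S_j^2S_k^2$, which yields exactly the stated $\Delta_i$. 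Your $C_i$ gives $32S_i^4S_j^2S_k^2\xi_j\xi_k$ instead.

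Your remark about the other root can be made rigorous as follows. $Q$ is symmetric in $x,y,z$, so $Q\ge0$ only says that \emph{some} triangle inequality fails. The inequality you want fails exactly when $Q\ge0$ and $P\ge0$. On $\{Q\ge0\}$ one has $|P|\ge 2xy\sqrt{(1+x^2)(1+y^2)}>0$. Moreover $\xi_i\xi_j\xi_k P=S_i^2\xi_i-S_j^2\xi_j-S_k^2\xi_k-2S_j^2S_k^2/\xi_i$, which tends to $+\infty$ as $\xi_i\to\infty$ and to $-\infty$ as $\xi_i\to0^+$. Hence $P>0$ on the ray $\xi_i\ge r_+$ and $P<0$ on $0<\xi_i\le r_-$. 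This identifies $U_i$ exactly with $\{d_{jk}\ge d_{ij}+d_{ik}\}$.

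\textbf{Simple connectivity.} This step is not yet an argument. ``Removing a closed region attached at infinity preserves simple connectivity'' is not a valid general principle. You also cannot treat the regions one at a time: straightening one boundary graph destroys the graph structure of the other two.

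The diagonal flow can be made to work, but only after you prove two things. First, $\Omega^H_{ijk}(d_0)$ is invariant under $u\mapsto u+s(1,1,1)$ for $s\ge0$; this holds because all three $\sinh\frac{d}{2}$ scale by $\lambda=e^s$ and $x\sqrt{1+\lambda^2y^2}+y\sqrt{1+\lambda^2x^2}$ is increasing in $\lambda$. Second, every diagonal line eventually enters $\Omega^H_{ijk}(d_0)$.

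A much cleaner route avoids all of this. Since $\log\sinh\frac{\tilde d_{rs}}{2}=\log\sinh\frac{(d_0)_{rs}}{2}+\frac{u_r+u_s}{2}$, and the linear map $(u_i,u_j,u_k)\mapsto\bigl(\frac{u_i+u_j}{2},\frac{u_i+u_k}{2},\frac{u_j+u_k}{2}\bigr)$ has determinant $-\frac14$, the map $u\mapsto(\tilde d_{ij},\tilde d_{ik},\tilde d_{jk})$ is a real-analytic diffeomorphism $\mathbb R^3\to(0,\infty)^3$. Under it, $\Omega^H_{ijk}(d_0)$ is the preimage of the open convex cone cut out by the three strict triangle inequalities, hence contractible. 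Likewise $U_i$ is the preimage of $\{\tilde d_{jk}\ge\tilde d_{ij}+\tilde d_{ik}\}$, whose boundary is a planar face. The three faces are pairwise disjoint in the open orthant. This gives simple connectivity, analyticity of the boundary, and disjointness at once.
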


\begin{lemma}\label{lemma:EXT}\cite{BPS,XZ}
	The inner angles $\theta^{jk}_i, \theta^{ik}_j, \theta^{ij}_k$ defined for $(u_i, u_j, u_k) \in \Omega_{ijk}^H(d_0)$ could be extended to be continuous functions $\widetilde{\theta}^{jk}_i, \widetilde{\theta}^{ik}_j, \widetilde{\theta}^{ij}_k$ defined on $\mathbb{R}^3$, by setting
	\[
	\widetilde{\theta}^{jk}_i(u_i, u_j, u_k) =
	\begin{cases}
		\theta^{jk}_i, & \text{if } (u_i, u_j, u_k) \in \Omega_{ijk}^H, \\
		\pi, & \text{if } (u_i, u_j, u_k) \in U_i, \\
		0, & \text{if } (u_i, u_j, u_k) \in U_j \text{ or } U_k.
	\end{cases}
	\]
\end{lemma}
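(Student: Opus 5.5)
The plan is to verify continuity of $\widetilde{\theta}$ on all of $\mathbb{R}^3$ by examining each piece of the decomposition given in Theorem \ref{thm: admiss}. In the interior of $\Omega_{ijk}^H(d_0)$ the angles are real-analytic functions of $(u_i,u_j,u_k)$, since the edge lengths $d_{rs}=2\,\mathrm{arcsinh}\bigl(e^{(u_r+u_s)/2}\sinh\frac{(d_0)_{rs}}{2}\bigr)$ are analytic and the hyperbolic law of cosines gives $\theta_i^{jk}=\arccos\frac{\cosh d_{ij}\cosh d_{ik}-\cosh d_{jk}}{\sinh d_{ij}\sinh d_{ik}}$. On the interiors of the $U_\alpha$ the extension is constant. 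Because the $U_\alpha$ are pairwise disjoint closed sets, each boundary point of $\Omega^H_{ijk}(d_0)$ lies in exactly one $\partial U_\alpha$. So everything reduces to showing that as $u\to u^*\in\partial U_\alpha$ from inside $\Omega^H_{ijk}$, the three angles tend to the prescribed constants.

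First I will identify what happens on $\partial U_i$. There the inequality defining $U_i$ becomes an equality. Unwinding the quadratic in $\xi_i$ (with $A_i,B_i,\Delta_i$ as given), this should be equivalent to the triangle inequality degenerating as $d_{jk}=d_{ij}+d_{ik}$: the edge opposite $i$ becomes the sum of the other two. I will check this directly by writing the degeneracy condition $\cosh d_{jk}=\cosh(d_{ij}+d_{ik})$ in terms of $S_r$ and $\xi_r$. I will use $\cosh d=1+2\sinh^2\frac d2$ and $\sinh\frac{d_{rs}}{2}=S_t/\sqrt{\xi_r\xi_s}$, and solve for $\xi_i$. I expect the larger root $\frac{-B_i+\sqrt{\Delta_i}}{2A_i}$ to be the relevant branch, and that region $U_i$ corresponds to $d_{jk}\ge d_{ij}+d_{ik}$. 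This algebraic identification is the step I expect to be the main obstacle. It is a somewhat messy computation, and one must argue that the other root does not correspond to a reachable degeneracy, using the positivity of $\xi$.

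Given this identification, continuity follows from the law of cosines. The edge lengths are continuous on $\mathbb{R}^3$, remain bounded and positive near $u^*$, and satisfy $d_{jk}\to d_{ij}+d_{ik}$. Hence the cosine expression for $\theta_i^{jk}$ tends to
\[
\frac{\cosh d_{ij}\cosh d_{ik}-\cosh(d_{ij}+d_{ik})}{\sinh d_{ij}\sinh d_{ik}}=-1,
\]
so $\theta_i^{jk}\to\pi$. Similarly, the cosine of $\theta_j^{ik}$ tends to
\[
\frac{\cosh d_{ij}\cosh(d_{ij}+d_{ik})-\cosh d_{ik}}{\sinh d_{ij}\sinh(d_{ij}+d_{ik})}=1,
\]
using $\cosh(a+b)\cosh a-\sinh(a+b)\sinh a=\cosh b$, so $\theta_j^{ik}\to 0$, and likewise $\theta_k^{ij}\to0$. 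The denominators stay bounded away from zero because all lengths are positive and finite at $u^*$. These limits match the values assigned on $U_i$, and by symmetry the same holds on $U_j$ and $U_k$. Finally, $\arccos$ is continuous on $[-1,1]$ and the cosine expressions are continuous up to the boundary, so the glued function is continuous. This yields Lemma \ref{lemma:EXT}.
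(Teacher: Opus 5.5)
Your argument is correct, and it is essentially the standard one behind the citation. The paper gives no proof of Lemma \ref{lemma:EXT} and simply defers to \cite{BPS,XZ}; your route of identifying $\partial U_i$ with the degeneration $d_{jk}=d_{ij}+d_{ik}$ and then passing to the limit in the hyperbolic cosine law is the natural way to fill that in.

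The one step to repair is your plan for root selection. Positivity of $\xi$ does not discard the smaller root: it can be positive, and then it marks the degeneration $d_{jk}=|d_{ij}-d_{ik}|$, which is a point of $\partial U_j$ or $\partial U_k$. Instead, use monotonicity. With $\xi_j,\xi_k$ fixed, $d_{jk}-d_{ij}-d_{ik}$ is strictly increasing in $\xi_i$, going from $-\infty$ to $d_{jk}>0$, so it has exactly one zero. At that zero the unsquared relation gives $S_i^2\xi_i^2-(S_j^2\xi_j+S_k^2\xi_k)\xi_i-2S_j^2S_k^2>0$. This sign holds at the larger root and fails at any positive smaller root, which identifies the zero as the larger root and gives $U_i=\{d_{jk}\ge d_{ij}+d_{ik}\}$.
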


\begin{remark}\cite{XZ}\label{remark: VP2}
Let $u: V\to \mathbb{R}$ be a function, and set $d:=u* d_0$. If $(u_i, u_j, u_k) \in \Omega_{ijk}^H(d_0)$ tends to a point $(\overline{u}_i,\overline{u}_j,\overline{u}_k) \in \partial U_i$,
we have $\frac{\partial \theta^{jk}_i}{\partial u_j} \to +\infty$, $\frac{\partial \theta^{jk}_i}{\partial u_k} \to +\infty$.
We restate the formula \eqref{eq:ddarea} from Remark \ref{remark: darea}:
\[
\frac{\partial \mathrm{Area}(\triangle ijk)}{\partial u_i} = \frac{\partial \theta^{ik}_j}{\partial u_i} (\cosh d_{ij} - 1) + \frac{\partial \theta^{ij}_k}{\partial u_i} (\cosh d_{ik} - 1).
\]
Then $\frac{\partial \mathrm{Area}(\triangle ijk)}{\partial u_i} \to +\infty$, which implies
\[
\frac{\partial \theta^{jk}_i}{\partial u_i} = -\frac{\partial \mathrm{Area}(\triangle ijk)}{\partial u_i} - \frac{\partial \theta^{ik}_j}{\partial u_i} - \frac{\partial \theta^{ij}_k}{\partial u_i} \to -\infty
\]
as $(u_i,u_j,u_k) \to (\overline{u}_i,\overline{u}_j,\overline{u}_k) \in \partial U_i$. This implies that the extended function $\widetilde{\theta}^{jk}_i$ is not a Lipschitz function.
\end{remark}

A metric $d$ on $(M,\mathcal{T})$ is called a generalized PH metric if $d_{rs}\ge d_{rt}+d_{st}$ for some face $\{r,s,t\}$.
Based on the extension technique in Lemma \ref{lemma:EXT}, the combinatorial curvature $K$, defined on nondegenerate hyperbolic discrete conformal factors in $\Omega^H$, can be continuously extended to the generalized combinatorial curvature. We define
\[
\widetilde{K}_i=2\pi-\sum_{\triangle{ijk} \in F}\tilde{\theta }^{jk}_i,
\]
for any $u: V\to \mathbb{R}$. We call such $u: V\to \mathbb{R}$ a generalized hyperbolic discrete conformal factor.

The extended hyperbolic combinatorial Yamabe flow is also naturally defined:
\begin{equation}\label{Eq: ECYF}
	\begin{cases}
		\dfrac{du_i}{dt}=-\widetilde{K}_i,\\[4pt]
		u_i(0)=0.
	\end{cases}
\end{equation}
Here $\widetilde{K}_i$ is the generalized combinatorial curvature of the metric $u*d_0$ at time $t$ at vertex $i$.

\section{Short-Time Existence of the Hyperbolic Combinatorial Yamabe Flow}

We begin with a basic geometric lemma that provides crucial support for the subsequent analysis of the existence time for the flow \eqref{Eq: CYF}.

\begin{lemma}\label{lem: zl}
	For any hyperbolic triangle $\triangle ijk$, we denote by $\theta_i$, $\theta_j$ and $\theta_k$ the interior angles of the triangle, and by $d_{ij}$, $d_{ik}$ and $d_{jk}$ the edge lengths. Assume
	that $\theta_i,\theta_j,\theta_k \ge \epsilon>0$. Then there exists a constant $\delta=\delta(\epsilon)$ such that for every function $u: V \to \mathbb{R}$ satisfying $\| u \| _{L^{\infty }}\le \delta$, the interior angles $\hat{\theta}_{i}$, $\hat{\theta}_{j}$ and $\hat{\theta}_{k}$ under the conformal metric $\tilde{d}=u*d$ satisfy $|\hat{\theta}_{r}-\theta_{r}|\le \frac{\epsilon}{8}$ for $r \in \{i, j, k\}$.
\end{lemma}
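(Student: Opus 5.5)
The plan is a continuity (bootstrap) argument along the straight path from $0$ to $u$ in conformal-factor space. The key point is that the partial derivatives of the angles given by Lemma \ref{Lem: VP} and Remark \ref{remark: VP2} are bounded by a constant depending only on $\epsilon$, as long as the angles stay bounded away from $0$. Only the three values $u_i,u_j,u_k$ matter. For $s\in[0,1]$ set $u^{(s)}=s(u_i,u_j,u_k)$ and $d^{(s)}=u^{(s)}*d$, and write $\theta_r(s)$ for the corresponding angles whenever the triangle is nondegenerate. Let
\[
s^*=\sup\{s\in[0,1] : \text{the triangle is nondegenerate and } |\theta_r(\sigma)-\theta_r|\le \tfrac{\epsilon}{8} \text{ for all } \sigma\le s,\ r\in\{i,j,k\}\}.
\]
The aim is to show $s^*=1$ once $\delta=\delta(\epsilon)$ is small.

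\textbf{Step 1: a priori length bound.} Since all angles of the original triangle are at least $\epsilon$, its area $\pi-\theta_i-\theta_j-\theta_k$ is at most $\pi-3\epsilon$. A standard hyperbolic trigonometry fact then applies: a triangle with all angles at least $\epsilon$ has all sides at most some $L(\epsilon)$. For instance, the second cosine law $\cosh d_{jk}=(\cos\theta_i+\cos\theta_j\cos\theta_k)/(\sin\theta_j\sin\theta_k)$ gives $\cosh d_{jk}\le 2/\sin^2\epsilon$. Along the path, $\sinh(d^{(s)}_{rt}/2)\le e^{\delta}\sinh(d_{rt}/2)$, so all lengths remain bounded by $L'(\epsilon)$ for $\delta\le 1$. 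Hence $\cosh d^{(s)}_{rt}-1\le C_1(\epsilon)$.

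\textbf{Step 2: derivative bounds while $s<s^*$.} On $[0,s^*)$ every angle is at least $7\epsilon/8$. Since $\theta_i+\theta_j<\pi-\theta_k$, we get
\[
\tfrac{\theta_i+\theta_j-\theta_k}{2}<\tfrac{\pi}{2}-\theta_k\le \tfrac{\pi}{2}-\tfrac{7\epsilon}{8}.
\]
Also $\theta_i+\theta_j-\theta_k>-\theta_k>-\pi+\tfrac{7\epsilon}{4}$, so the argument of each $\tan$ in \eqref{Eq: VP1} lies in $(-\tfrac{\pi}{2}+\tfrac{7\epsilon}{8},\tfrac{\pi}{2}-\tfrac{7\epsilon}{8})$. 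Therefore
\[
\Bigl|\frac{\partial\theta_r}{\partial u_t}\Bigr|\le \tfrac12\cot\tfrac{7\epsilon}{8}\qquad (r\neq t).
\]
For the diagonal terms I would use the identity from Remark \ref{remark: VP2}:
\[
\frac{\partial\theta_i}{\partial u_i}=-\frac{\partial \mathrm{Area}}{\partial u_i}-\frac{\partial\theta_j}{\partial u_i}-\frac{\partial\theta_k}{\partial u_i}.
\]
Combining it with \eqref{eq:ddarea} and Step 1 bounds the diagonal terms by $C_2(\epsilon)$. Thus $\|\nabla\theta_r\|\le C(\epsilon)$ on the path for $s<s^*$.

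\textbf{Step 3: closing the bootstrap.} For $s<s^*$ we get $|\theta_r(s)-\theta_r|\le \int_0^s |\nabla\theta_r\cdot(u_i,u_j,u_k)|\le 3C(\epsilon)\delta$. Choose $\delta=\min\{1,\epsilon/(48C(\epsilon))\}$, so this is at most $\epsilon/16$. Now suppose $s^*<1$. The limiting metric at $s^*$ cannot be degenerate: the angles converge to values in $[7\epsilon/8,\pi)$, whereas degeneration forces, by Lemma \ref{lemma:EXT}, some angle to tend to $0$ (and one to $\pi$). So the triangle is nondegenerate at $s^*$ with strict inequality $\epsilon/16<\epsilon/8$. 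By openness of $\Omega^H_{ijk}$ and continuity of the angles, the condition persists slightly beyond $s^*$, which is a contradiction. Hence $s^*=1$, giving $|\hat\theta_r-\theta_r|\le\epsilon/8$.

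I expect the main obstacle to be making all constants independent of the particular triangle. Small, nearly Euclidean triangles are harmless, because the bounds above involve only angles and upper bounds on lengths. The real risk is long sides and the blow-up described in Remark \ref{remark: VP2}. The former is handled by the length bound of Step 1, the latter by the bootstrap keeping the angles away from $0$ and $\pi$. The step needing the most care is the degeneration exclusion at $s^*$.
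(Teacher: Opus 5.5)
Your proposal is correct, but it takes a genuinely different route from the paper.

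\textbf{The paper's route.} The paper never differentiates the angles. It first uses the second cosine law to get $\sinh^2\frac{d_{rt}}{2}<1/\sin^2\epsilon$, together with bounds on ratios of these quantities. It then writes $\cos\theta_i$ through the first cosine law in terms of $\sinh\frac{d}{2}$ and $\cosh\frac{d}{2}$, and substitutes $\sinh\frac{\tilde d_{rt}}{2}=e^{(u_r+u_t)/2}\sinh\frac{d_{rt}}{2}$. Splitting the difference into four terms gives $|\cos\theta_i-\cos\hat\theta_i|<10(e^{4\delta}-1)/\sin^6\epsilon$. An explicit choice of $\delta$ and a mean value step then yield $|\hat\theta_i-\theta_i|\le\epsilon/8$.

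\textbf{Your route.} You get an a priori Lipschitz bound on the angles as functions of $u$ from \eqref{Eq: VP1} and \eqref{eq:ddarea}. This bound is valid only while all angles stay above $7\epsilon/8$, and you close it with a continuity argument along $s\mapsto su$.

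\textbf{Comparison.} The paper's proof is self-contained, using only the two cosine laws, and gives a fully explicit $\delta$. However, it is computation-heavy and leaves two points implicit. First, the nondegeneracy of $\tilde d$ (which follows because the cosine-law expression stays in $(-1,1)$). Second, the lower bound on $\sin\xi$ in the final step, which needs an a priori reason why $\hat\theta_i$ stays away from $0$ and $\pi$, or a monotonicity argument. Your bootstrap handles exactly these issues cleanly. Nondegeneracy at $s^*$ follows from the continuity of the extended angles in Lemma \ref{lemma:EXT}, or directly from the first cosine law since the lengths stay positive and bounded. The price is reliance on the quoted variational formulas, in particular the Glickenstein--Thomas formula for the diagonal entries, which is where your Step 1 length bound enters.

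\textbf{One point to tighten.} The same limiting argument at $s^*$ shows that $s^*$ itself belongs to your set. You need this both to get the contradiction when $s^*<1$ and to conclude the estimate at $s=1$ when $s^*=1$.
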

This lemma is elementary in nature, and its proof involves lengthy computations. For brevity, we put the proof in the appendix.

We recall several fundamental graph-theoretic notions.

The combinatorial distance $d(i,j)$ between two vertices $i,j\in V$ is defined via
\begin{equation*}
	d(i,j)=\inf\bigl\{k\in\mathbb{N}\,\big|\,\exists\text{ a path }i_0i_1\cdots i_k \text{ with }i_0=i,\ i_k=j\bigr\}.
\end{equation*}
Given a vertex $i\in V$ and an integer $n\ge 0$, the closed combinatorial ball $B_n(i)\subset V$ of radius $n$ centered at $i$ is the set
\begin{equation*}
	B_n(i)=\bigl\{j\in V\mid d(i,j)\le n\bigr\}.
\end{equation*}

Let $(M,\mathcal{T})$ be a locally finite infinitely triangulated surface equipped with a fixed initial PH metric $d_0$.
Let $\mathcal{T}_n$ be the subcomplex of $\mathcal{T}$ consisting of triangles whose vertices are contained in $B_n(i)$.
We denote by $V_n$, $E_n$, and $F_n$ the sets of vertices, edges, and faces of $\mathcal{T}_n$. Set $\partial V_n$ and $\operatorname{int}(V_n)$ the boundary vertices and inner vertices of $V_n$.
Pick a sequence of finite subsets $V_n$ of $V$ such that
\begin{align*}
	V_n\subset V_{n+1},\quad
	\bigcup_{n=1}^{\infty}V_n=V.
\end{align*}
We next consider the hyperbolic Yamabe flow on $V_n$.

\begin{align*}\label{Eq: LCYF}
	\begin{cases}
		\dfrac{du_j^{[n]}}{dt} = -K_j,
		& \forall\, j\in \operatorname{int}(V_n),\;\forall t>0, \\[6pt]
		u_j^{[n]}(0) = 0,
		& \forall\, (j,t)\in \big(V_n\times\{0\}\big) \cup \big(\partial V_n\times(0,\infty)\big).
	\end{cases}
\end{align*}
Since the vertex set $V_n$ is finite, the short-time existence of solutions to the above system follows directly from standard ODE theory.

\textbf{Proof of Theorem \ref{thm :ste}.} Applying Lemma \ref{lem: zl}, there exists a constant $\delta_0 = \delta_0(\epsilon)$ ensuring that the metric $d=u\ast d_0$ satisfies the triangle inequality whenever $u: V \to \mathbb{R}$ satisfies $\|u\|_{L^\infty} \le \delta_0$. In addition, for each triangle $\triangle ijk \in F$, we have
\begin{equation}\label{Eq: Ng1}
	\theta^{jk}_i (d) \ge \frac{7}{8}\epsilon.
\end{equation}
Fix a vertex $j \in V$, and choose a sufficiently large $n$ such that $j \in \operatorname{int}(V_n)$. By the definition of $K_j$, we have
\begin{equation}\label{eq:UK}
	\left|\dfrac{d u_j^{[n]}}{d t}\right|=|K_j(u)| \le (2 + \deg(j))\pi \le (2 + D)\pi.
\end{equation}
Therefore, set
\[
T_0 = \frac{\delta_0}{(2+D)\pi},
\]
which depends only on $\epsilon$ and $D$. Then for all $t \in [0, T_0]$,
\begin{equation}\label{eq :UU}
	\big|u^{[n]}_j(t)\big|
	= \big|u^{[n]}_j(t)-u^{[n]}_j(0)\big|
	= \left| \int_{0}^{t} \frac{d}{ds} u^{[n]}_j(s) ds \right|
	\le \int_{0}^{T_0} \big| K_j(s) \big| ds
	\le (2 + D)\pi \, T_0
	\le \delta_0.
\end{equation}
Therefore, the estimations (\ref{Eq: Ng1}) hold for the metric $d$ when $t\in [0,T_0]$. From the above estimations \eqref{eq:UK} and \eqref{eq :UU}, we conclude that,
\begin{equation}\label{Eq: UC1}
\sup_{i}\big\|u^{[n]}_j(t)\big\|_{C^1[0,T_0]}=\sup_{t\in[0,T_0]}|u^{[n]}_j(t)|+\sup_{t\in[0,T_0]}\left|\dfrac{d u_j^{[n]}}{d t}\right|
\le (2 + D)\pi( T_0+1) .
\end{equation}
We proceed to estimate $\displaystyle\sup_{i}\big\|u^{[n]}_j(t)\big\|_{C^2[0,T_0]}$.
Combining the curvature evolution equation \eqref{eq: dKdt} from Remark \ref{remark: darea}, we obtain
\begin{equation*}\label{ieq:2Ut}
	\begin{aligned}
	\left|\dfrac{d^2 u_j^{[n]}}{d t^2}\right|=	\left| \frac{dK_j}{dt} \right|
			&\le \sum_{j\sim i}\frac{1}{2\cosh^2\frac{d_{ij}}{2}} \Biggl| \tan\frac{\theta^{jk_1}_i + \theta^{ik_1}_j - \theta^{ij}_{k_1}}{2}
			+ \tan\frac{\theta^{jk_2}_i + \theta^{ik_2}_j - \theta^{ij}_{k_2}}{2} \Biggr| \big|K_j - K_i\big| \\
			&\qquad + \sum_{j\sim i}\tanh^2\frac{d_{ij}}{2} \Biggl| \tan\frac{\theta^{jk_1}_i + \theta^{ik_1}_j - \theta^{ij}_{k_1}}{2}
			+ \tan\frac{\theta^{jk_2}_i + \theta^{ik_2}_j - \theta^{ij}_{k_2}}{2} \Biggr| \big|K_i\big|.
	\end{aligned}
\end{equation*}
By the angle condition for nondegenerate hyperbolic triangles, we have
\begin{equation*}
	\theta^{jk_1}_i+\theta^{ik_1}_j+\theta^{ij}_{k_1} < \pi.
\end{equation*}
Combining this with \eqref{Eq: Ng1} and rearranging terms, we obtain
\begin{equation*}
	\frac{7}{4}\epsilon - \frac{\pi}{2}
	< \frac{\theta^{jk_1}_i + \theta^{ik_1}_j - \theta^{ij}_{k_1}}{2}
	< \frac{\pi}{2} - \frac{7}{8}\epsilon.
\end{equation*}
Similarly,
\begin{equation*}
	\frac{7}{4}\epsilon - \frac{\pi}{2}
	< \frac{\theta^{jk_2}_i + \theta^{ik_2}_j - \theta^{ij}_{k_2}}{2}
	< \frac{\pi}{2} - \frac{7}{8}\epsilon,
\end{equation*}
and therefore
\begin{equation}\label{eq:2tan}
	-2\cot\left(\frac{7}{4}\epsilon\right)
	< \tan \frac{\theta^{jk_1}_i + \theta^{ik_1}_j - \theta^{ij}_{k_1}}{2}
	+ \tan \frac{\theta^{jk_2}_i + \theta^{ik_2}_j - \theta^{ij}_{k_2}}{2}
	< 2\cot\left(\frac{7}{8}\epsilon\right).
\end{equation}
Using the trivial estimate $\cosh^2\frac{d_{ij}}{2}> 1$, we have
\begin{equation}\label{eq:cosh}
0	<\frac{1}{2\cosh^2\frac{d_{ij}}{2}}<\frac{1}{2}.
\end{equation}
By the above inequalities \eqref{eq:2tan} and \eqref{eq:cosh}, we conclude that
\begin{equation*}
	\begin{aligned}
		\frac{1}{2\cosh^2\frac{d_{ij}}{2}} \Biggl| \tan\frac{\theta^{jk_1}_i + \theta^{ik_1}_j - \theta^{ij}_{k_1}}{2}
		+ \tan\frac{\theta^{jk_2}_i + \theta^{ik_2}_j - \theta^{ij}_{k_2}}{2} \Biggr| <\cot\left(\frac{7}{8}\epsilon\right).
	\end{aligned}
\end{equation*}
Similarly, in view of the elementary bound $0<\tanh^2\frac{d_{ij}}{2}<1$ and using estimation \eqref{eq:2tan}, we further conclude that
\begin{equation*}
\tanh^2\frac{d_{ij}}{2}	\left|\tan\frac{\theta^{jk_1}_i+\theta^{ik_1}_j-\theta^{ij}_{k_1}}{2}
	+\tan\frac{\theta^{jk_2}_i+\theta^{ik_2}_j-\theta^{ij}_{k_2} }{2} \right|
	< 2\cot\left(\frac{7}{8}\epsilon\right).
\end{equation*}
Since the vertex degree is bounded by $D$, and by the definition of combinatorial curvature, we have $|K_j - K_i| < \pi D$. Combining with the estimate $|K_i| \le (2+D)\pi$ in \eqref{eq:UK}, we obtain
\begin{equation}\label{ieq:2Ut2}
	\left|\dfrac{d^2u_j^{[n]}}{dt^2}\right|
	< C_0(\epsilon,D), \quad C_0(\epsilon,D) := \pi D(4+3D)\cot\left(\frac{7}{8}\epsilon\right),
\end{equation}
where $C_0(\epsilon,D)$ is a constant depending only on $\epsilon$ and $D$.
We use the constant $T_0$ defined above.
Combining inequalities \eqref{Eq: UC1} and \eqref{ieq:2Ut2}, we obtain
\begin{equation}\label{Eq: UC2}
\sup_{i}\big\|u^{[n]}_j(t)\big\|_{C^2[0,T_0]}
= \sup_{t\in[0,T_0]}|u^{[n]}_j(t)| + \sup_{t\in[0,T_0]}\left|\dfrac{d u_j^{[n]}}{d t}\right|
+ \sup_{t\in[0,T_0]}\left|\dfrac{d^2 u_j^{[n]}}{d t^2}\right|
< C(\epsilon,D),
\end{equation}
where $C(\epsilon,D) := (2+D)\pi(T_0+1) + C_0(\epsilon,D)$.
By the above estimations \eqref{Eq: UC1} and \eqref{Eq: UC2}, for each vertex $j\in V$,  the sequence $\{u^{[n]}_j(t)\}_{i=1}^{\infty}$ is uniformly bounded in $C^1$ and $C^2$ on $[0,T_0]$. In particular, the uniform $C^2$-boundedness implies equicontinuity in $C^1[0,T_0]$. Therefore, we can apply the Arzel\`a--Ascoli theorem and the standard diagonal argument.
Hence there exists a subsequence $\{u^{[n_l]}_j(t)\}_{l=1}^{\infty}$ of $\{u^{[n]}_j(t)\}_{i=1}^{\infty}$ such that $u^{[n_l]}_j(t)$ converges uniformly in $C^1[0,T_0]$ to $u^*_j(t)$ as $l\to\infty$. Moreover, the limit function $u^*_j(t)$ satisfies
\begin{equation*}
	\begin{cases}
		\dfrac{du_j}{dt}=-K_j,\\[4pt]
		u_j(0)=0,
	\end{cases}
\end{equation*}
for all $t\in[0,T_0]$.
The smoothness of $u^*(t)$ with respect to $t$ follows from the smoothness of $K(u)$ with respect to $u$ for all $t\in[0,T_0]$.	\hfill $\square$

\section{Uniqueness of the Hyperbolic Combinatorial Yamabe Flow}
To prove the uniqueness of the flow, we need to introduce a discrete Laplacian operator on an undirected weighted graph $G=(V,E,F)$. For a positive function on the edge set $w: E\to (0,\infty)$, we call it an edge weight function. We define the discrete Laplacian $\Delta_{\omega} $ associated with the edge weight $\omega$ as for any $g: V\to \mathbb{R}$,
\begin{align*}
	\Delta_{\omega}g_i=\sum_{j:j\sim i}w_{ij}(g_j-g_i).
\end{align*}

Next we recall a maximum principle of nonlinear heat equations in the discrete setting, which is obtained by Ge-Hua-Zhou \cite{GHZ}.

\begin{lemma}(Maximum principle \cite{GHZ})
	Let $\mathcal{T}$ be an infinite triangulation of surface $M$, and let $[\omega(t)]_{t>0}$ be a one-parameter family of weights on $E$ for $t\in [0,T]$ with $T>0$ such that
	\begin{align}\label{Eq: CO1}
		\sum_{j: j\sim i}\omega_{ij}(t)\le C,\quad \forall (i,t)\in V\times[0,T],
	\end{align}
where $C$ is a uniform constant. Suppose a function $g: V\times [0,T]\to \mathbb{R}$ satisfies
	\begin{align*}
	\frac{dg}{dt} \le \Delta_{\omega}g+hg.
	\end{align*}
If $g$ is a bounded function in $V\times [0,T]$ with $g\le 0$ at $t=0$, and if $h\le B$ for some constant $B$, then
\begin{align*}
	g(j,t)\le 0,\quad  \forall (j,t)\in   V\times [0,T].
\end{align*}
\end{lemma}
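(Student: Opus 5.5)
The plan is to argue by contradiction with a barrier function. The barrier makes the supremum attained despite $V$ being infinite, and the bounded weighted degree \eqref{Eq: CO1} controls its Laplacian. Throughout I assume, as is implicit in the statement, that $g(j,\cdot)$ is $C^1$ in $t$ for each $j$, and I write $|g|\le M$ on $V\times[0,T]$.

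\emph{Step 1: make the zeroth-order coefficient negative.} Set $f=e^{-(B+1)t}g$. Since $\Delta_\omega$ is linear and acts only in the space variable,
\[
\frac{df}{dt}\le \Delta_{\omega}f+(h-B-1)f, \qquad h-B-1\le -1 .
\]
Moreover $f$ is bounded by $M$, $f\le 0$ at $t=0$, and $f\le 0$ everywhere is equivalent to $g\le 0$ everywhere.

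\emph{Step 2: the barrier.} Fix a base vertex $o\in V$ and let $\psi_i=d(o,i)$ be the combinatorial distance. Since $|\psi_j-\psi_i|\le 1$ for $j\sim i$, condition \eqref{Eq: CO1} gives $|\Delta_{\omega}\psi_i|\le \sum_{j\sim i}\omega_{ij}\le C$. For $\eta>0$ define
\[
F_i(t)=f_i(t)-\eta\bigl(\psi_i+Ct+1\bigr).
\]
Then $F<0$ at $t=0$. Also $F_i(t)\le M-\eta(\psi_i+1)$, so $F<0$ on $[0,T]$ outside the ball $B_{N}(o)$ with $N=\lceil M/\eta\rceil$. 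Local finiteness makes this ball a finite set of vertices, so $m(t)=\max_{i\in V}F_i(t)$ is a maximum of finitely many continuous functions wherever it is nonnegative. Hence if $F\ge 0$ somewhere, there is a first time $t_0>0$ and a vertex $i$ with $F_i(t_0)=0$, $F\le 0$ on $V\times[0,t_0]$, and consequently $\frac{dF_i}{dt}(t_0)\ge 0$.

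\emph{Step 3: contradiction at the first touching point.} At $(i,t_0)$ the maximality gives $F_j(t_0)\le F_i(t_0)$ for $j\sim i$, so
\[
\Delta_{\omega}f_i=\Delta_{\omega}F_i+\eta\Delta_{\omega}\psi_i\le \eta C .
\]
Moreover $f_i(t_0)=\eta(\psi_i+Ct_0+1)>0$, hence $(h-B-1)f_i\le -f_i<0$. Therefore
\[
\frac{dF_i}{dt}(t_0)\le \Delta_{\omega}f_i+(h-B-1)f_i-\eta C<\eta C-\eta C=0,
\]
which contradicts $\frac{dF_i}{dt}(t_0)\ge 0$. Thus $F\le 0$ on $V\times[0,T]$ for every $\eta>0$. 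Letting $\eta\to 0$ pointwise gives $f\le 0$, hence $g\le 0$.

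The main obstacle is Step 2: ensuring that a first positive time and a maximizing vertex exist on an infinite graph. This is exactly where the boundedness of $g$ (through $M$), local finiteness, and the uniform weight bound \eqref{Eq: CO1} are used. Without \eqref{Eq: CO1}, $\Delta_\omega\psi$ could not be absorbed by the linear-in-time term $\eta Ct$.
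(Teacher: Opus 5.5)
Your proof is correct. The paper gives no proof of this lemma; it simply imports it from Ge--Hua--Zhou \cite{GHZ}, so there is no argument in the text to compare yours against. Your route is the standard barrier proof of this kind of discrete parabolic maximum principle:
\begin{itemize}
\item the exponential factor makes the zeroth-order coefficient at most $-1$;
\item the weighted-degree bound controls $\Delta_\omega$ of the distance function, which is absorbed by the term $\eta C t$;
\item boundedness of $g$ together with local finiteness confines any nonnegative value of $F$ to a finite ball, so a first touching time and a maximizing vertex exist;
\item the strict inequality at that point comes from $(h-B-1)f_i\le -f_i<0$.
\end{itemize}

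A few bookkeeping points should be tidied, none of which affects the argument.
\begin{itemize}
\item In Step 1 you only have $|f|\le Me^{|B+1|T}$, not $|f|\le M$, unless you first replace $B$ by $\max\{B,0\}$. This is harmless, since $h\le B$ implies $h\le \max\{B,0\}$.
\item You use $\omega_{ij}\ge 0$ twice: in $|\Delta_\omega\psi_i|\le C$ and in $\Delta_\omega F_i(t_0)\le 0$. It should be stated. It holds because the paper's edge weights are positive by definition.
\item The function $\psi=d(o,\cdot)$ is finite only on the connected component of $o$. If connectedness is not assumed, run the argument componentwise.
\item If the differential inequality is only assumed on $(0,T)$, prove $F\le 0$ on $[0,T']$ for each $T'<T$ and conclude at $t=T$ by continuity.
\end{itemize}
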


\begin{remark}\cite{GHZ}\label{remark: Map}
	If $g(t)$ is a bounded solution to the equation
\begin{align*}
	\frac{d g(t)}{d t}=\Delta_{\omega}g +hg
\end{align*}
with $g(0)\equiv 0$ on $V$, where $\omega(t)$ satisfies (\ref{Eq: CO1}), and the function $h\le B$ for some constant $B$, then for any $t\ge 0$  , $g(t)\equiv 0$.
\end{remark}

\begin{lemma}
Let $u$ and $\hat{u}$ be two solutions of the flow (\ref{Eq: CYF}) in hyperbolic background geometry. Then
\begin{align*}
	\frac{d(u(t)-\hat{u}(t))}{dt}=\Delta_{\omega}(u(t)-\hat{u}(t))+h(u-\hat{u} ),
\end{align*}
where
\begin{align*}
\omega_{ij}(t)=\int_{0}^{1} \left(\frac{\partial \theta^{jk_1}_i}{\partial u_j}+\frac{\partial \theta^{jk_2}_i}{\partial u_j}\right) (su(t)+(1-s)\hat{u}(t))ds,
\end{align*}
\begin{align*}
h=-\int_{0}^{1} \sum_{\triangle ijk \in F}\frac{\partial \mathrm{Area}(\triangle ijk )}{\partial u_i}
(su(t)+(1-s)\hat{u}(t))ds.
\end{align*}
\end{lemma}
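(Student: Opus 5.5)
The identity follows from the fundamental theorem of calculus applied vertexwise along the segment joining $\hat u(t)$ to $u(t)$. Fix $t$ and a vertex $i$, and set $w:=u(t)-\hat u(t)$ and $\gamma(s)=su(t)+(1-s)\hat u(t)$ for $s\in[0,1]$. Since both solutions satisfy \eqref{Eq: CYF},
\[
\frac{d}{dt}(u_i-\hat u_i)=-K_i(u(t))+K_i(\hat u(t))=-\int_0^1\frac{d}{ds}K_i(\gamma(s))\,ds .
\]
Because $\mathcal T$ is locally finite, $K_i$ depends only on the finitely many values $u_i$ and $u_j$ with $j\sim i$. Therefore
\[
\frac{d}{ds}K_i(\gamma(s))=\sum_{j\sim i}\frac{\partial K_i}{\partial u_j}(\gamma(s))\,w_j+\frac{\partial K_i}{\partial u_i}(\gamma(s))\,w_i ,
\]
which is a finite sum, so no convergence issue arises on the infinite triangulation.

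Next I substitute the partial derivatives from Remark \ref{remark: VP}. For $j\sim i$ we have $\partial K_i/\partial u_j=-\bigl(\partial\theta^{jk_1}_i/\partial u_j+\partial\theta^{jk_2}_i/\partial u_j\bigr)$. The diagonal term is $\partial K_i/\partial u_i=\sum_{j\sim i}\bigl(\partial\theta^{jk_1}_i/\partial u_j+\partial\theta^{jk_2}_i/\partial u_j\bigr)+\sum_{\triangle ijk\in F}\partial\mathrm{Area}(\triangle ijk)/\partial u_i$. Inserting these and integrating in $s$ gives
\[
\frac{d w_i}{dt}=\sum_{j\sim i}\omega_{ij}(t)\,w_j-\sum_{j\sim i}\omega_{ij}(t)\,w_i+h_i\,w_i=\Delta_\omega w_i+h_iw_i ,
\]
with $\omega_{ij}$ and $h$ exactly as in the statement. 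The weight $\omega_{ij}$ is symmetric in $i,j$ by Lemma \ref{Lem: VP}, since $\partial\theta^{jk}_i/\partial u_j=\partial\theta^{ik}_j/\partial u_i$.

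The main point to check is that the segment stays in the admissible region. Every intermediate factor $\gamma(s)$ must give a nondegenerate PH metric on each triangle, i.e.\ $(\gamma_i,\gamma_j,\gamma_k)(s)\in\Omega^H_{ijk}(d_0)$, so that the angles are smooth and Remark \ref{remark: VP} applies. I would handle this in one of two ways. In the short-time regime of Theorem \ref{thm :ste}, both $u$ and $\hat u$ satisfy $\|\cdot\|_{L^\infty}\le\delta_0$, and so does every convex combination $\gamma(s)$; Lemma \ref{lem: zl} then guarantees nondegeneracy along the whole segment. In general, one can instead use the fact that the hyperbolic admissible space is not convex but the conditions defining $U_\alpha$ in Theorem \ref{thm: admiss} are controlled along segments. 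Alternatively, one can argue via the $C^1$ extension of the angle functions, which suffices for the fundamental theorem of calculus.

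The rest is the mechanical substitution above together with measurability and integrability in $s$. The integrability is immediate because the integrands are continuous on the compact interval $[0,1]$ once nondegeneracy holds.
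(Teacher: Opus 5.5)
Your argument is the paper's own: Newton--Leibniz along $s\mapsto su(t)+(1-s)\hat u(t)$, substitution of the partial derivatives from Remark~\ref{remark: VP}, and regrouping into $\Delta_\omega(u-\hat u)+h(u-\hat u)$. Your point that the whole segment must lie in the admissible region is a real hypothesis that the paper leaves implicit here and secures only in the proof of Theorem~\ref{thm:uq}, exactly by your first option ($\|su+(1-s)\hat u\|_{L^\infty}\le\delta_0$ together with Lemma~\ref{lem: zl}). Keep that option and drop the other two: $\Omega^H_{ijk}(d_0)$ is not convex in $u$, and the extended angles $\widetilde{\theta}$ are merely continuous, not even Lipschitz by Remark~\ref{remark: VP2}. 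It is the energy $\widetilde{\mathcal{E}}_{ijk}$, not the angles, that is $C^1$.
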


\proof The proof proceeds analogously to the Euclidean case \cite{JI}, and we  adapt the related geometric arguments to the hyperbolic setting. By the combinatorial Yamabe flow (\ref{Eq: CYF}) under hyperbolic background geometry, we have
\begin{align*}
	\frac{d(u_i(t)-\hat{u}_i(t))}{dt} &=-(K_i(u(t))-K_i(\hat{u} (t)), \quad \forall i\in V.
\end{align*}
By the Newton-Leibniz formula and Remark \ref{remark: VP} ,

\begin{align*}
	&K_i(u(t))-K_i(\hat{u} (t))\\
	&=	\int_{0}^{1} \frac{d}{ds} K_i(su(t)+(1-s)\hat{u}(t))ds\\
	&=\sum_{j:j\sim i}\int_{0}^{1} \frac{\partial K_i}{\partial u_j} (su(t)+(1-s)\hat{u}(t))ds
	\cdot [u_j(t)-\hat{u}_j(t) ]\\
	&\quad +\int_{0}^{1} \frac{\partial K_i}{\partial u_i} (su(t)+(1-s)\hat{u}(t))ds
	\cdot [u_i(t)-\hat{u}_i(t) ]\\
	&=-\sum_{j:j\sim i}\int_{0}^{1} \left(\frac{\partial \theta^{jk_1}_i }{\partial u_j}+\frac{\partial \theta^{jk_2}_i }{\partial u_j}\right) (su(t)+(1-s)\hat{u}(t))ds
	\cdot [u_j(t)-\hat{u}_j(t)]\\
	&\quad +\sum_{j:j\sim i}\int_{0}^{1} \left(\frac{\partial \theta^{jk_1}_i }{\partial u_j}+\frac{\partial \theta^{jk_2}_i }{\partial u_j}\right) (su(t)+(1-s)\hat{u}(t))ds
	\cdot [u_i(t)-\hat{u}_i(t)]\\
	&\quad +\int_{0}^{1} \sum_{\triangle ijk \in F}\frac{\partial \mathrm{Area}(\triangle ijk )}{\partial u_i}
	(su(t)+(1-s)\hat{u}(t))ds	\cdot[u_i(t)-\hat{u}_i(t) ]\\
	&=-\sum_{j:j\sim i}\int_{0}^{1} \left(\frac{\partial \theta^{jk_1}_i }{\partial u_j}+\frac{\partial \theta^{jk_2}_i }{\partial u_j}\right) (su(t)+(1-s)\hat{u}(t))ds
	\cdot [u_j(t)-\hat{u}_j(t)-(u_i(t)-\hat{u}_i(t))]\\
	&\quad +\int_{0}^{1} \sum_{\triangle ijk \in F}\frac{\partial \mathrm{Area}(\triangle ijk )}{\partial u_i}
	(su(t)+(1-s)\hat{u}(t))ds	\cdot[u_i(t)-\hat{u}_i(t) ].
  \end{align*} 	\hfill $\square$

\begin{remark}\label{remark: udun}
	If $d(su(t)+(1-s)\hat{u}(t))$ is $\epsilon$-uniformly Delaunay and $\epsilon$-uniformly nondegenerate for $s\in [0,1]$, then $w_{ij}(t)>0$, $h<0$. By the variational formula \eqref{Eq: VP1}, we obtain
	\begin{equation*}
          \begin{aligned}[b]
				\frac{\partial \theta^{jk_1}_i }{\partial u_j} + \frac{\partial \theta^{jk_2}_i }{\partial u_j}
				&= \frac{1}{2\cosh^2\frac{d_{ij}}{2}} \left( \tan\frac{\theta^{jk_1}_i + \theta^{ik_1}_j - \theta^{ij}_{k_1}}{2} + \tan\frac{\theta^{jk_2}_i + \theta^{ik_2}_j - \theta^{ij}_{k_2}}{2} \right) \\
				&= \frac{1}{2\cosh^2\frac{d_{ij}}{2}} \cdot \frac{\sin\frac{\theta^{jk_1}_i + \theta^{ik_1}_j - \theta^{ij}_{k_1} + \theta^{jk_2}_i + \theta^{ik_2}_j - \theta^{ij}_{k_2}}{2}}{\sin\frac{\pi - \theta^{jk_1}_i - \theta^{ik_1}_j + \theta^{ij}_{k_1}}{2} \sin\frac{\pi - \theta^{jk_2}_i - \theta^{ik_2}_j + \theta^{ij}_{k_2}}{2}}.
			\end{aligned}
	\end{equation*}
A triangulation $\mathcal{T}$ is $\epsilon$-uniformly Delaunay if and only if $\theta^{jk_1}_i + \theta^{ik_1}_j - \theta^{ij}_{k_1} + \theta^{jk_2}_i + \theta^{ik_2}_j - \theta^{ij}_{k_2}\ge \epsilon$. By the properties of nondegenerate hyperbolic triangles, we further have $\theta^{jk_1}_i + \theta^{ik_1}_j - \theta^{ij}_{k_1} + \theta^{jk_2}_i + \theta^{ik_2}_j - \theta^{ij}_{k_2}< 2\pi$, as well as $0 < \pi - \theta^{jk_1}_i - \theta^{ik_1}_j + \theta^{ij}_{k_1} < 2\pi$ and $0 < \pi - \theta^{jk_2}_i - \theta^{ik_2}_j + \theta^{ij}_{k_2} < 2\pi$. These estimates together imply that
\begin{equation*}
\frac{\partial \theta^{jk_1}_i }{\partial u_j} + \frac{\partial \theta^{jk_2}_i }{\partial u_j}>0.
\end{equation*}
Then
\begin{align*}
	\omega_{ij}(t)=\int_{0}^{1} \left(\frac{\partial \theta^{jk_1}_i }{\partial u_j}+\frac{\partial \theta^{jk_2}_i }{\partial u_j}\right) (su(t)+(1-s)\hat{u}(t))ds>0.
\end{align*}
Moreover, in view of Remark \ref{remark: darea}, we obtain the following partial derivative relationship for the area
\begin{equation*}
	\begin{aligned}
		\sum_{\triangle ijk\in F } \frac{\partial \mathrm{Area} (\triangle ijk) }{\partial u_i}
		&=\sum_{j\sim i }\left(\frac{\partial \theta^{jk_1}_i }{\partial u_j}
		+\frac{\partial \theta^{jk_2}_i }{\partial u_j}\right)\big(\cosh d_{ij}-1\big) >0.
	\end{aligned}
\end{equation*}
This further implies that
\begin{align*}
	h=-\int_{0}^{1} \sum_{\triangle ijk \in F}\frac{\partial \mathrm{Area}(\triangle ijk )}{\partial u_i}
	(su(t)+(1-s)\hat{u}(t))ds<0.
\end{align*}
\end{remark}

\textbf{Proof of Theorem \ref{thm:uq}.} By the proof of Theorem \ref{thm :ste} and Lemma \ref{lem: zl}, there exists a positive constant $T<T_0(\epsilon, D)$ such that for all $t\in [0, T]$, $\|u\|_{L^\infty} \le \delta_0$.
 The metric $u*d_0$ satisfies the triangle inequality. Furthermore, for each triangle $\triangle ijk \in F$, we have
\begin{equation*}
	\theta^{jk}_i(u*d_0) \ge \frac{7}{8}\epsilon.
\end{equation*}
For any two adjacent triangles $\triangle ijk_1, \triangle ijk_2 \in F$, it follows that

\begin{equation*}
	\begin{aligned}[b]
		\theta^{ij}_{k_1}(u*d_0) + \theta^{ij}_{k_2}(u*d_0) &\le \theta^{ij}_{k_1}(d_0) + \theta^{ij}_{k_2}(d_0) + \frac{1}{4}\epsilon \\
		&\le \theta^{jk_1}_{i}(d_0) + \theta^{ik_1}_{j}(d_0) + \theta^{jk_2}_{i}(d_0) + \theta^{ik_2}_{j}(d_0) - \frac{3}{4}\epsilon \\
		&\le \theta^{jk_1}_{i}(u*d_0) + \theta^{ik_1}_{j}(u*d_0) + \theta^{jk_2}_{i}(u*d_0) + \theta^{ik_2}_{j}(u*d_0) - \frac{1}{4}\epsilon.
	\end{aligned}
\end{equation*}
This implies that the metric $u\ast d_0$ is $\tfrac{7}{8}\epsilon$-uniformly nondegenerate and $\tfrac{1}{4}\epsilon$-uniformly Delaunay. The metric $\hat{u}\ast d_0$ satisfies the same properties. Moreover, we have
\[
\|su+(1-s)\hat{u}\|_{L^\infty} \le \delta_0,\qquad \forall s\in [0,1].
\]
As a consequence, the interpolated metric $\big(su+(1-s)\hat{u}\big)\ast d_0$ is also $\tfrac{7}{8}\epsilon$-uniformly nondegenerate and $\tfrac{1}{4}\epsilon$-uniformly Delaunay. By Remark \ref{remark: udun}, we have $\omega_{ij}(t)>0$ and $h<0$. Combining formulas \eqref{eq:2tan} and \eqref{eq:cosh} from the proof of Theorem \ref{thm :ste}, we obtain
\begin{align*}
		\frac{\partial \theta^{jk_1}_i }{\partial u_j} + \frac{\partial \theta^{jk_2}_i }{\partial u_j}
	&= \frac{1}{2\cosh^2\frac{d_{ij}}{2}} \left( \tan\frac{\theta^{jk_1}_i + \theta^{ik_1}_j - \theta^{ij}_{k_1}}{2} + \tan\frac{\theta^{jk_2}_i + \theta^{ik_2}_j - \theta^{ij}_{k_2}}{2} \right)< \cot\left(\frac{7}{8}\epsilon\right).
\end{align*}
Since $\mathcal{T}$ has bounded degree,
\begin{align*}
\sum_{j: j\sim i}\omega_{ij}(t)=\sum_{j: j\sim i}\int_{0}^{1} \left(\frac{\partial \theta^{jk_1}_i }{\partial u_j}+\frac{\partial \theta^{jk_2}_i }{\partial u_j}\right) (su(t)+(1-s)\hat{u}(t))ds\le  C(\epsilon,D,T).
\end{align*}
Let $g(t)=u(t)-\hat{u}(t)$. Then we have
\begin{align*}
	\frac{d g(t)}{d t}=\Delta_{\omega(t)}g(t)+h(t)g(t).
\end{align*}
Moreover, $|g(t)|\le (4+2D)\pi T$ by the flow \eqref{Eq: CYF} . Then $u(t)\equiv \hat{u}(t)$ follows from Remark \ref{remark: Map}. 	\hfill $\square$

	\section{Long-Time Existence of the Extended Hyperbolic Combinatorial Yamabe Flow}
Each curvature $\widetilde{K}_i$ satisfies the uniform bound $\widetilde{K}_i \le (2+\deg(i))\pi$ on a finitely triangulated surface.
This implies that the right-hand side of \eqref{Eq: ECYF} forms a bounded continuous vector field on $\mathbb{R}^{|V|}$,
which in turn yields global existence via the standard global existence theory for ODEs \cite{Pontryagin}.
We first establish the long-time existence of solutions to the extended flow \eqref{Eq: ECYF} on an infinitely triangulated surface with hyperbolic background geometry. Our proof follows the same strategy as in \cite{JI}, which established the corresponding result in the Euclidean setting, with suitable modifications adapted to the hyperbolic framework. To this end, we first introduce the following lemma.

\begin{lemma}\label{lemma: evans}
	Let $f : \mathbb{R}^n \to \mathbb{R}$ be a Lipschitz function. Assume that $g$ is a continuous function and $\nabla f = g$ in the weak sense. Then $f \in C^1$ and $\nabla f = g$ everywhere.
\end{lemma}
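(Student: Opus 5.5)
The plan is to mollify $f$ and pass to the limit, using that the gradients of the mollifications converge locally uniformly because $g$ is continuous. Let $\eta_\varepsilon$ be a standard mollifier and set $f_\varepsilon=f*\eta_\varepsilon$. Then $f_\varepsilon\in C^\infty(\mathbb{R}^n)$.

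First I will verify that $\nabla f_\varepsilon=g*\eta_\varepsilon$. Since $f$ is Lipschitz, it is locally integrable, so the mollification is defined. Differentiating under the integral sign gives
\[
\partial_k f_\varepsilon(x)=\int f(y)\,\partial_{x_k}\eta_\varepsilon(x-y)\,dy=-\int f(y)\,\partial_{y_k}\bigl[\eta_\varepsilon(x-y)\bigr]\,dy .
\]
The function $y\mapsto\eta_\varepsilon(x-y)$ is an admissible test function, so the weak identity $\nabla f=g$ converts the right-hand side into $\int g_k(y)\eta_\varepsilon(x-y)\,dy=(g_k*\eta_\varepsilon)(x)$.

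Next come the two convergence statements.
\begin{itemize}
\item Since $f$ is continuous (indeed Lipschitz), $f_\varepsilon\to f$ locally uniformly as $\varepsilon\to0$.
\item Since $g$ is continuous, it is uniformly continuous on compact sets, so $g*\eta_\varepsilon\to g$ locally uniformly.
\end{itemize}
This second statement is the step where the continuity hypothesis on $g$ is used. It is also the step I consider the main point of the argument. Lipschitz continuity alone only gives $L^\infty$-weak-$*$ convergence of the gradients, which would not be enough.

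Finally I will apply the standard theorem that a locally uniform limit of $C^1$ functions whose gradients also converge locally uniformly is $C^1$, with the limit gradient. To make this concrete, fix $x$ and $h$. By the fundamental theorem of calculus along the segment from $x$ to $x+h$,
\[
f_\varepsilon(x+h)-f_\varepsilon(x)=\int_0^1(g*\eta_\varepsilon)(x+sh)\cdot h\,ds .
\]
Letting $\varepsilon\to0$ and using the uniform convergence on the compact segment gives $f(x+h)-f(x)=\int_0^1 g(x+sh)\cdot h\,ds$. Continuity of $g$ at $x$ then yields
\[
f(x+h)-f(x)-g(x)\cdot h=o(|h|).
\]
Hence $f$ is differentiable at every point with $\nabla f=g$ everywhere, and since $g$ is continuous, $f\in C^1$.
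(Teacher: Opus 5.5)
Your proof is correct, and it takes a different route from the one the paper relies on. The paper gives no argument of its own. It cites Evans, Chapter 5, Theorems 5--6, i.e.\ the almost-everywhere differentiability of $W^{1,p}_{\mathrm{loc}}$ functions for $p>n$ and Rademacher's theorem. That route uses the Lipschitz hypothesis essentially, and it only delivers that $f$ is differentiable a.e.\ with classical gradient equal to $g$ a.e. To reach the stated conclusion one must still upgrade ``a.e.'' to ``everywhere'', e.g.\ by integrating along lines and using continuity of $g$, a step the citation leaves implicit.

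Your mollification argument avoids Rademacher entirely. The identity $\nabla f_\varepsilon=g*\eta_\varepsilon$, the locally uniform convergence of $f_\varepsilon$ and of $g*\eta_\varepsilon$, and the limiting segment formula $f(x+h)-f(x)=\int_0^1 g(x+sh)\cdot h\,ds$ give differentiability at every point directly. That is exactly the upgrade the citation skips. It also shows that Lipschitz continuity is used only to make $f$ continuous and locally integrable: any $f\in L^1_{\mathrm{loc}}$ with continuous weak gradient agrees a.e.\ with a $C^1$ function. Moreover, it localizes verbatim to an open set $U$ by mollifying on $\{x:\operatorname{dist}(x,\partial U)>\varepsilon\}$, which is the setting in which the paper actually applies the lemma (to $u_j^*$ on $[0,T]$).

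What the cited route buys in exchange is brevity, plus the stronger fact that a Lipschitz $f$ is differentiable a.e.\ with no assumption on its weak gradient at all.
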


The proof of this lemma can be found in \cite[Chapter 5, Theorems 5--6]{Evans}.

\textbf{Proof of Theorem \ref{thm:longtime}.}
Consider the extended hyperbolic combinatorial Yamabe flow on $V_n$ given by
\begin{align}\label{eq: ieyf}
	\begin{cases}
		\dfrac{du_j^{[n]}}{dt} = -\widetilde{K}_j(u^{[n]}),
		& \forall\, j\in \operatorname{int}(V_n),\;\forall t>0, \\[6pt]
		u_j^{[n]}(0) = 0,
		& \forall\, (j,t)\in \big(V_n\times\{0\}\big) \cup \big(\partial V_n\times(0,\infty)\big).
	\end{cases}
\end{align}
Fix a vertex $j\in V$, and consider sufficiently large $n$ such that $j\in \operatorname{int}(V_n)$. By the definition of the extended curvature $\widetilde{K}_j$, we have
\begin{align}\label{eq:ek}
	\left|\dfrac{d u_j^{[n]}}{d t}\right|=|\widetilde{K}_j(u)| \leq (2 + \deg(j))\pi.
\end{align}
For any fixed $T>0$, consider the time interval $[0,T]$. Combining \eqref{eq: ieyf} and \eqref{eq:ek}, we obtain
\begin{equation*}
	\big|u^{[n]}_j(t)\big|
	= \big|u^{[n]}_j(t)-u^{[n]}_j(0)\big|
	= \left| \int_{0}^{t} \frac{d}{ds} u^{[n]}_j(s) ds \right|
	\le \int_{0}^{T_0} \big| K_j(s) \big| ds
	\le (2 + D)\pi \, T.
\end{equation*}

The family $\{u^{[n]}(t)\}$ is uniformly bounded and equicontinuous on any finite time interval $[0,T]$.
By the Arzelà–Ascoli theorem and a standard diagonal argument, there exists a subsequence of $\{u^{[n]}(t)\}$, denoted by $u^{[n_l]}(t)$, such that for every $j\in V$, the sequence $u_j^{[n_l]}(t)$ converges uniformly on $[0,T]$ to some limit function $u_j^*(t)$. This yields a limiting function $u^*: V\times[0,T]\to\mathbb{R}$. It remains to verify that $u^*\in C^1\big(V\times [0,T]\big)$.

Note that each $u_j^{[n]}$ is Lipschitz continuous with Lipschitz constant bounded by $(2+\deg(j))\pi$. Consequently, the limit $u_j^*$ is also Lipschitz continuous on $[0,T]$. Let $\phi$ be a smooth test function satisfying $\phi(0)=\phi(T)=0$. For each fixed $j\in V$, when $n$ is sufficiently large we have $j \in \operatorname{int}(V_n)$, and hence $\frac{d}{dt}u_j^{[n]} = -\widetilde{K}_j(u^{[n]})$ holds on $[0,T]$. By the dominated convergence theorem,
\[
\int_0^T u_j^* \phi' \,dt
= \lim_{n \to \infty} \int_0^T u_j^{[n]} \phi' \,dt
= \lim_{n \to \infty} \int_0^T \widetilde{K}_j(u^{[n]}) \phi \,dt
= \int_0^T \widetilde{K}_j(u^*) \phi \,dt.
\]
The last equality follows from the continuity of the extended curvature $\widetilde{K}$. Therefore, the identity $\frac{d}{dt}u_j^* = -\widetilde{K}_j(u^*)$ holds in the weak sense. Applying Lemma \ref{lemma: evans}, we conclude $u_j^* \in C^1[0,T]$.
\hfill $\square$

\begin{remark}
	Let $u(t)$ be a solution to the extended flow \eqref{Eq: ECYF}. By Remark \ref{remark: VP2}, $|\partial \widetilde{K}_i / \partial u_j| = \infty$ whenever a triangle becomes degenerate. Hence uniqueness is not guaranteed by standard ODE theory. Nevertheless, we prove that under uniformly bounded vertex degree and some integrability condition, the extended flow on infinitely triangulated surfaces is unique.
\end{remark}

	\section{Stability and uniqueness of the extended flow}

We first recall some basic properties of PH metrics. For more details on these properties, see \cite{BPS}.
The admissible space of PH metrics for each triangular face $\triangle ijk\in F$ is denoted by $\Omega_{ijk}^H$, and that for the subcomplex $\mathcal{T}_n$ is written as $\Omega^H_{F_n}$ in the following.

Lemma \ref{Lem: VP} shows that the matrix
\begin{equation*}
\Lambda_{ijk}^H = \frac{\partial(\theta_i,\theta_j,\theta_k)}{\partial(u_i,u_j,u_k)}
=
\begin{pmatrix}
	\frac{\partial \theta_i}{\partial u_i} & \frac{\partial \theta_i}{\partial u_j} & \frac{\partial  \theta_i}{\partial u_k}\\[6pt]
	\frac{\partial  \theta_j}{\partial u_i} & \frac{\partial  \theta_j}{\partial u_j} & \frac{\partial  \theta_j}{\partial u_k}\\[6pt]
	\frac{\partial  \theta_k}{\partial u_i} & \frac{\partial  \theta_k}{\partial u_j} & \frac{\partial  \theta_k}{\partial u_k}
\end{pmatrix}
\end{equation*}
is symmetric on $\Omega_{ijk}^H$. Furthermore, one has the following result for the matrix $\Lambda_{ijk}^H$.

\begin{lemma}\label{lem: matrix}\cite{BPS}
		The matrix $\Lambda_{ijk}^H$ is symmetric, negative definite on $\Omega_{ijk}^H$.
\end{lemma}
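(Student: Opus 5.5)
The symmetry of $\Lambda_{ijk}^H$ is already contained in Lemma \ref{Lem: VP}, which gives $\partial\theta_i/\partial u_j=\partial\theta_j/\partial u_i$ for all pairs. So the plan concerns only negative definiteness. I will use a continuity/connectedness argument on $\Omega_{ijk}^H$, which Theorem \ref{thm: admiss} tells us is connected (indeed simply connected). The argument has two parts: (1) $\det\Lambda_{ijk}^H\neq 0$ everywhere on $\Omega_{ijk}^H$; (2) $\Lambda_{ijk}^H$ is negative definite at one point of $\Omega_{ijk}^H$. Since $\Lambda_{ijk}^H$ is a continuous family of symmetric matrices, its eigenvalues depend continuously on $u$. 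By (1) no eigenvalue can cross $0$, so by connectedness the signature is constant. Then (2) gives negative definiteness on all of $\Omega_{ijk}^H$.

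For (1), I factor $\Lambda_{ijk}^H=\frac{\partial(\theta_i,\theta_j,\theta_k)}{\partial(d_{jk},d_{ik},d_{ij})}\cdot\frac{\partial(d_{jk},d_{ik},d_{ij})}{\partial(u_i,u_j,u_k)}$.

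The first factor is invertible on nondegenerate triangles. A hyperbolic triangle is determined up to isometry by its three angles, and the map from lengths to angles is a real-analytic diffeomorphism onto its image, with inverse given by the dual cosine law $\cosh d_{jk}=\frac{\cos\theta_j\cos\theta_k+\cos\theta_i}{\sin\theta_j\sin\theta_k}$. I will verify the nonvanishing of this Jacobian directly from that formula if needed.

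For the second factor, differentiate $\sinh\frac{d_{ij}}{2}=e^{(u_i+u_j)/2}\sinh\frac{(d_0)_{ij}}{2}$. This gives $\partial d_{ij}/\partial u_i=\partial d_{ij}/\partial u_j=\tanh\frac{d_{ij}}{2}>0$, and $\partial d_{ij}/\partial u_k=0$. So the matrix has the zero pattern
$\begin{pmatrix}0&b&c\\ a&0&c'\\ a'&b'&0\end{pmatrix}$ with all displayed entries positive. Its determinant is $b c' a'+c a b'>0$, hence nonzero. Thus $\det\Lambda_{ijk}^H\neq0$ on $\Omega_{ijk}^H$.

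For (2), I pick $u$ such that $u\ast d_0$ is an equilateral triangle of side $\ell$. This is possible because solving $e^{(u_r+u_s)/2}\sinh\frac{(d_0)_{rs}}{2}=\sinh\frac{\ell}{2}$ for the three edges is a linear system in $u$ with invertible coefficient matrix $\begin{pmatrix}1&1&0\\1&0&1\\0&1&1\end{pmatrix}$. By symmetry, $\Lambda_{ijk}^H$ then has constant diagonal $\alpha$ and constant off-diagonal $\beta$, so its eigenvalues are $\alpha+2\beta$ and $\alpha-\beta$ (the latter double).

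Let $\theta$ denote the common angle, so $0<\theta<\pi/3$ in hyperbolic geometry. By \eqref{Eq: VP1}, $\beta=\frac{1}{2\cosh^2(\ell/2)}\tan\frac{\theta}{2}>0$. By the relation in Remark \ref{remark: VP2}, $\alpha=-\partial\mathrm{Area}/\partial u_i-2\beta$, so $\alpha+2\beta=-\partial\mathrm{Area}/\partial u_i$. By \eqref{eq:ddarea}, $\partial\mathrm{Area}/\partial u_i=2\beta(\cosh\ell-1)>0$. Hence $\alpha+2\beta<0$ and $\alpha-\beta=-\partial\mathrm{Area}/\partial u_i-3\beta<0$, so the matrix is negative definite at this point.

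The main obstacle is step (1), specifically checking rigorously that the Jacobian of angles with respect to lengths never vanishes. If the dual-cosine-law argument feels too soft, I would instead compute $\det\Lambda_{ijk}^H$ directly from the closed form in \eqref{Eq: VP1} together with the diagonal formula from Remark \ref{remark: VP2}, and show it has constant sign.
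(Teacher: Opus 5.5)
Your proof is correct, and it takes a genuinely different route from the paper. The paper gives no argument for this lemma; it quotes it from \cite{BPS}, where negative definiteness appears as the strict concavity of an explicit single-triangle energy built from the Lobachevsky function. You replace this with a continuation argument: $\det\Lambda_{ijk}^H\neq 0$ via the chain rule, connectedness of $\Omega_{ijk}^H$ from Theorem \ref{thm: admiss}, and an explicit check at an equilateral triangle.

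The step you flag as the main obstacle is already rigorous. Write $\Theta$ for the length-to-angle map. The dual cosine law defines a smooth map $G$ on the open set $\{\theta_r>0,\ \theta_i+\theta_j+\theta_k<\pi\}$. Its right-hand side exceeds $1$ there, because $\cos(\theta_j+\theta_k)+\cos\theta_i=2\cos\frac{\theta_i+\theta_j+\theta_k}{2}\cos\frac{\theta_j+\theta_k-\theta_i}{2}>0$. Since $G\circ\Theta=\mathrm{id}$ on nondegenerate triangles, $DG\cdot D\Theta=I$, so $D\Theta$ is invertible and no determinant needs to be computed.

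Your equilateral computation also checks out. With $c=\cosh\ell$ one finds $\beta=\frac{1}{(c+1)\sqrt{2c+1}}$ and $\alpha=-2c\beta$, so the eigenvalues are $-2(c-1)\beta$ and $-(2c+1)\beta$. You could even bypass \eqref{eq:ddarea}. Adding a common constant to $u_i,u_j,u_k$ keeps the triangle equilateral and lengthens it, which lowers $\theta$ because $\cos\theta=\frac{\cosh\ell}{\cosh\ell+1}$. This gives $\alpha+2\beta<0$, and hence $\alpha-\beta=(\alpha+2\beta)-3\beta<0$.

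As for what each route buys: yours is elementary and self-contained, but it depends essentially on the global fact that $\Omega_{ijk}^H$ is connected, and it gives no quantitative control of the eigenvalues. An argument through an explicit energy, as in \cite{BPS}, gets concavity directly from a formula, independently of the topology of $\Omega_{ijk}^H$. It is also the source the paper invokes for the $C^1$ concave extension $\widetilde{\mathcal{E}}_{ijk}$ used in the proof of Theorem \ref{thm:s}. Since the paper already needs the simple connectivity in Theorem \ref{thm: admiss} to define $\mathcal{E}_{ijk}$ as a path integral, your reliance on connectedness costs nothing extra here.
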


 Theorem \ref{thm: admiss} and Lemma \ref{lem: matrix} imply the following function
\begin{equation*}
	\mathcal{E}_{ijk}(u_i,u_j,u_k)=\int_{(\overline{u}_i,\overline{u}_j,\overline{u}_k)}^{(u_i,u_j,u_k)} \theta_i du_i+\theta_j du_j+\theta_k du_k
\end{equation*}
is a well-defined smooth locally strictly concave function on $\Omega_{ijk}^H$ with $\nabla_{u_i}\mathcal{E}_{ijk}=\theta_i$.
The function $\mathcal{E}_{ijk}$ is called the Ricci energy function for the $\triangle ijk$. Set
\begin{equation*}
	\mathcal{E}_n(u)=2\pi\sum_{i\in V_n} u_i-\sum_{\triangle ijk\in F_n}\mathcal{E}_{ijk}(u_i,u_j,u_k)
\end{equation*}
to be the Ricci energy function defined on the admissible space $\Omega^H_{F_n}$ of nondegenerate hyperbolic discrete conformal factors for $(M,\mathcal{T}_n)$.
 Then $\mathcal{E}_n(u)$ is a locally strictly convex function defined on $\Omega^H_{F_n}$ with
 \begin{equation*}
 \nabla_{u_i}\mathcal{E}_n=2\pi -\sum_{\triangle ijk\in F_n}\theta_i^{jk}.
 \end{equation*}
If $i\in \operatorname{int}(V_n)$, we obtain
 $\nabla_{u_i} \mathcal{E}_n=K_i(u) .$
In the work of Bobenko–Pinkall–Springborn \cite{BPS}, the energy is formulated using the Lobachevsky function. Despite different representations, both functionals possess convexity and induce equivalent gradient flows.

By virtue of the extension results from Bobenko–Pinkall–Springborn in \cite{BPS},
the locally concave function $\mathcal{E}_{ijk}$ for nondegenerate hyperbolic discrete conformal factors on $\triangle ijk$ can be extended to a $C^1$ smooth concave function.
\begin{equation*}
	\widetilde{\mathcal{E}}_{ijk}(u_i, u_j, u_k) = \int_{(\overline{u}_i,\overline{u}_j,\overline{u}_k)}^{(u_i, u_j, u_k)} \tilde{\theta}_i \, du_i + \tilde{\theta}_j \, du_j + \tilde{\theta}_k \, du_k
\end{equation*}
defined on $\mathbb{R}^{3} $ with \(\nabla_{u_i} \widetilde{\mathcal{E}}_{ijk} = \tilde{\theta}_i\).
Here $\tilde{\theta}_i$ is the extension of $\theta_i$ given in Lemma \ref{lemma:EXT}.
As a result, the locally convex function $\mathcal{E}_n(u)$  for nondegenerate hyperbolic discrete conformal factors on a triangulated surface $(M, \mathcal{T}_n)$ can be extended to a \(C^1\) smooth convex function

\begin{equation*}
	\widetilde{\mathcal{E}}_n(u) = 2\pi \sum_{i \in V_n} u_i - \sum_{\triangle ijk \in F_n} \widetilde{\mathcal{E}}_{ijk}(u_i, u_j, u_k)
\end{equation*}
defined on $\mathbb{R}^{|V_n|} $ with
\begin{equation*}
\nabla_{u_i} \widetilde{\mathcal{E}}_n  = 2\pi - \sum_{\triangle ijk\in F_n} \tilde{\theta}_i^{jk}.
\end{equation*}
If $i\in \operatorname{int}(V_n)$, we obtain
$ \nabla_{u_i} \widetilde{\mathcal{E}}_n=\widetilde{K}_i(u) .$

The uniqueness result in Theorem \ref{thm:uniqueness} is a special case of the following stability property. We therefore only prove this generalized estimation.
\begin{theorem}[Stability]
	\label{thm:s}
	Let $(M,\mathcal{T})$ be an infinitely triangulated surface with vertex degrees uniformly bounded by $D$.
	Suppose $u(t)$ and $v(t)$ are two solutions to the extended hyperbolic combinatorial Yamabe flow satisfying
	\begin{equation}\label{uq:l1}
		u-v \in L^1([0, T); \ell^1(V)), \forall T>0
	\end{equation}
	and
	\begin{equation*}
		\|u(0) - v(0)\|_{\ell^2(V)} < \epsilon.
	\end{equation*}
	Then
	\begin{equation*}
		\|u(t) - v(t)\|_{\ell^2(V)} < \epsilon
	\end{equation*}
	holds for all $t \ge 0$.
\end{theorem}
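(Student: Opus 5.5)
The plan is to use the monotonicity of the extended curvature map, which comes from the convexity of the extended energy, and to show that $\|u(t)-v(t)\|_{\ell^2(V)}^2$ is nonincreasing in $t$. The strict inequality for all $t$ then follows from the strict inequality at $t=0$.

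Set $w=u-v$.

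\textbf{Step 1 (differentiability of the $\ell^2$ norm).} Both solutions satisfy $|\widetilde K_i|\le (2+D)\pi$, so $|\dot w_i|\le 2(2+D)\pi$ uniformly in $i$. Combined with $w\in L^1([0,T);\ell^1(V))$, this shows that $\sum_i |w_i(t)|\,|\dot w_i(t)|$ is integrable in time. Hence
\[
\tfrac12\|w(t)\|_{\ell^2}^2-\tfrac12\|w(0)\|_{\ell^2}^2=-\int_0^t\sum_{i\in V}w_i\big(\widetilde K_i(u)-\widetilde K_i(v)\big)\,ds .
\]
To justify this, I sum $\tfrac12 w_i^2(t)-\tfrac12 w_i^2(0)=\int_0^t w_i\dot w_i$ over $i$ and exchange sum and integral by Fubini. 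The same bound shows that $w(t)\in\ell^2$ for a.e. $t$; indeed $w(t)\in\ell^1$ for a.e. $t$, and by continuity the identity extends to all $t$.

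\textbf{Step 2 (monotonicity).} I will show that $\sum_i w_i(\widetilde K_i(u)-\widetilde K_i(v))\ge 0$ at each time. Regroup the sum by faces: $\widetilde K_i=2\pi-\sum_{\triangle ijk}\tilde\theta_i^{jk}$, and the $2\pi$ terms cancel. The sum then equals
\[
-\sum_{\triangle ijk\in F}\sum_{r\in\{i,j,k\}}\big(\tilde\theta_r(u)-\tilde\theta_r(v)\big)(u_r-v_r).
\]
The regrouping requires absolute convergence. This holds because $|\tilde\theta_r|\le\pi$, each vertex lies in at most $D$ faces, and $w(t)\in\ell^1$ (a.e. $t$), so the total is bounded by $2\pi D\|w\|_{\ell^1}$. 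For each face, $\widetilde{\mathcal E}_{ijk}$ is a $C^1$ concave function on $\mathbb R^3$ with gradient $(\tilde\theta_i,\tilde\theta_j,\tilde\theta_k)$, as stated above from \cite{BPS}. Its gradient is therefore monotone decreasing:
\[
\langle\nabla\widetilde{\mathcal E}_{ijk}(u)-\nabla\widetilde{\mathcal E}_{ijk}(v),u-v\rangle\le 0 .
\]
So every face contribution to the sum is $\ge0$. Consequently $\|w(t)\|_{\ell^2}^2\le\|w(0)\|_{\ell^2}^2<\epsilon^2$ for all $t\ge 0$, which is the claim. Theorem~\ref{thm:uniqueness} follows by letting $\epsilon\to0$.

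\textbf{Main obstacle.} The delicate point is the infinite-sum bookkeeping in Steps 1–2: exchanging the vertex sum with the time integral, and regrouping the vertex sum into face sums. The extended angles are not Lipschitz (Remark~\ref{remark: VP2}), so no linearization like that of Section~4 is available. The integrability hypothesis \eqref{uq:l1}, together with the bounded degree and the boundedness of the angles, is exactly what makes these sums absolutely convergent for a.e. $t$. I therefore expect to work with the integrated identity rather than a pointwise derivative, and to conclude by continuity of $w$ in each coordinate together with Fatou's lemma to pass to every $t$.
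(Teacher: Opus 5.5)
Your proof is correct. Its skeleton matches the paper's: convexity of the extended energy gives $\sum_{i\in V}(\widetilde K_i(u)-\widetilde K_i(v))(u_i-v_i)\ge 0$, the hypothesis $u-v\in L^1([0,T);\ell^1(V))$ justifies exchanging the vertex sum with time integration, and $\|u-v\|_{\ell^2}^2$ is nonincreasing.

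The two routes differ in how monotonicity is carried over to the infinite complex.
\begin{itemize}
\item \textbf{The paper.} It works on the finite exhaustion $\mathcal{T}_n$ and uses convexity of $\widetilde{\mathcal E}_n$ on $\mathbb{R}^{|V_n|}$. This produces the interior sum plus a boundary term over $\partial V_n$, bounded by $D\pi\sum_{i\in\partial V_n}|u_i-v_i|$. The boundary term is then killed by the $\ell^1$ tail as $n\to\infty$. Finally it differentiates $E(t)$ term by term, using the lemma on series of absolutely continuous functions.
\item \textbf{Your version.} You regroup the absolutely convergent vertex sum directly into face sums and apply gradient monotonicity of each concave $\widetilde{\mathcal E}_{ijk}$, so no boundary term ever appears. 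You also apply Fubini to the integrated identity instead of differentiating term by term.
\end{itemize}

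What your route buys:
\begin{itemize}
\item It is shorter.
\item It makes explicit that $\ell^1$ is only needed for absolute convergence at almost every time, which is exactly what the hypothesis provides. The paper asserts $\|u(t)-v(t)\|_{\ell^1}<\infty$ for every $t$, which is more than the hypothesis gives and more than is needed.
\item You correctly compare $\|w\|_{\ell^2}^2$ with $\epsilon^2$, whereas the paper writes $E(0)<\epsilon$.
\end{itemize}
What the paper's route buys: its exhaustion mirrors the finite-subcomplex energies and approximation scheme it uses elsewhere.

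One simplification: your closing appeal to continuity and Fatou's lemma is unnecessary. Each coordinate identity $\tfrac12 w_i(t)^2-\tfrac12 w_i(0)^2=\int_0^t w_i\dot w_i\,ds$ holds for every $t$. Both $\sum_i w_i(0)^2$ and $\sum_i\int_0^t|w_i\dot w_i|\,ds$ converge. Hence the summed identity, and $w(t)\in\ell^2$, hold for every $t$ directly.
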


	To rigorously justify the interchange of infinite summation and time differentiation in the energy estimate, we need a classical result on term-by-term differentiation of absolutely continuous functions \cite[p. 227]{Zhou}.
	
	\begin{lemma}\label{lem:ac-series-diff}\cite{Zhou}
		Let $\{f_i(t)\}_{i\in\mathbb N}\subset AC([0,T))$ be a sequence of absolutely continuous functions on $[0,T)$. Assume that
		\begin{equation*}
				\sum_{i=1}^\infty \int_0^T |f_i'(t)|\,dt<\infty,
		\end{equation*}
	and the series $F(t)=\sum_{i}f_i(t)$ converges at  least one point in $[0,T)$.
		Then $F(t)$ is absolutely continuous on $[0,T)$, and differentiation commutes with infinite summation almost everywhere:
		\begin{equation*}
			F'(t)=\sum_{i=1}^\infty f_i'(t),\quad \text{a.e. }t\in[0,T).
		\end{equation*}
		\end{lemma}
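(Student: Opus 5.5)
The plan is to reduce everything to the fundamental theorem of calculus for absolutely continuous functions, together with Fubini--Tonelli applied to the product of counting measure on $\mathbb N$ and Lebesgue measure on $[0,T)$.

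First, set $G(t)=\sum_{i=1}^\infty |f_i'(t)|$. This is a series of nonnegative measurable functions, so the monotone convergence theorem gives
\begin{equation*}
\int_0^T G(t)\,dt=\sum_{i=1}^\infty\int_0^T|f_i'(t)|\,dt<\infty .
\end{equation*}
Hence $G\in L^1(0,T)$ and $G(t)<\infty$ for a.e. $t$. Consequently the series $g(t):=\sum_i f_i'(t)$ converges absolutely for a.e. $t$. Moreover $g\in L^1(0,T)$ with $|g|\le G$.

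Second, let $t_0\in[0,T)$ be a point where $\sum_i f_i(t_0)$ converges. Since each $f_i$ is absolutely continuous, $f_i(t)=f_i(t_0)+\int_{t_0}^t f_i'(s)\,ds$ for every $t\in[0,T)$. We sum this identity over $i$. The key step is the interchange
\begin{equation*}
\sum_{i=1}^\infty\int_{t_0}^t f_i'(s)\,ds=\int_{t_0}^t\sum_{i=1}^\infty f_i'(s)\,ds=\int_{t_0}^t g(s)\,ds .
\end{equation*}
This is justified by Fubini's theorem on $\mathbb N\times[t_0,t]$ (or $[t,t_0]$), because the double integral of $|f_i'(s)|$ is bounded by $\int_0^T G<\infty$. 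Equivalently, one can apply dominated convergence to the partial sums, which are dominated by $G$. It follows that $\sum_i f_i(t)$ converges for every $t\in[0,T)$ and that
\begin{equation*}
F(t)=F(t_0)+\int_{t_0}^t g(s)\,ds .
\end{equation*}

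Third, since $g\in L^1(0,T)$, the indefinite integral is absolutely continuous on $[0,T)$. Indeed, $|F(b)-F(a)|\le\int_a^b G$, and absolute continuity of the Lebesgue integral yields the $\epsilon$--$\delta$ condition uniformly over finite disjoint families of intervals. The Lebesgue differentiation theorem then gives $F'(t)=g(t)=\sum_i f_i'(t)$ for a.e. $t\in[0,T)$, which is the claim.

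The main obstacle I expect is the interchange of summation and integration in the second step. The point is that the single $L^1$ bound on $\sum_i\int_0^T|f_i'|$ simultaneously yields three things: a.e. convergence of $\sum_i f_i'$, the Fubini justification, and pointwise convergence of $\sum_i f_i$ everywhere from convergence at the single point $t_0$. A minor point is that $T$ may be $+\infty$. This causes no difficulty, since all the estimates above use only $\int_0^T G<\infty$.
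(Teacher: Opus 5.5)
Your proof is correct. The single bound $\int_0^T\sum_i|f_i'|\,dt<\infty$ gives a.e.\ absolute convergence of $\sum_i f_i'$ to an integrable $g$, justifies summing $f_i(t)=f_i(t_0)+\int_{t_0}^t f_i'$ term by term, and then $F(t)=F(t_0)+\int_{t_0}^t g$ yields absolute continuity and $F'=g$ a.e.\ by Lebesgue differentiation. The paper gives no proof of its own and only cites \cite{Zhou}, where this standard term-by-term integration argument (Levi/Fubini plus Lebesgue differentiation) is the usual one, so your route matches the intended one; you also correctly allow an arbitrary base point $t_0$ rather than the left endpoint.
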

\textbf{Proof of Theorem \ref{thm:s}.} Suppose $u$ and $v$ are two different
generalized discrete conformal factors and define
\begin{equation*}
	\widetilde{f}_n(s)=\widetilde{\mathcal{E}}_n\big(su+(1-s)v\big),\quad s\in[0,1].
\end{equation*}
By the fact that $\widetilde{\mathcal{E}}_n(u)$ is a $C^1$ smooth convex function
defined on $\mathbb{R}^{|V_n|}$, we have $\widetilde{f}_n(s)$ is a $C^1$ smooth convex
function of $s\in[0,1]$.
A direct computation yields the first-order derivative
\begin{equation*}
	\begin{aligned}
		&\widetilde{f}_n'(s)\\
		&= \nabla\widetilde{\mathcal{E}}_n\bigl(su+(1-s)v\bigr)\cdot(u-v) \\
		&= \sum_{i\in \operatorname{int}(V_n)}\widetilde{K}_i\bigl(su+(1-s)v\bigr)(u_i-v_i)
		+ \sum_{i\in \partial V_n}\biggl(2\pi - \sum_{\triangle ijk\in F_n}
		\tilde{\theta}_i^{jk}\bigl(su+(1-s)v\bigr)\biggr)(u_i-v_i).
	\end{aligned}
\end{equation*}
The convexity of the function $\widetilde{f}_n$ implies that its derivative
$\widetilde{f}_n'(s)$ is nondecreasing on $[0,1]$. Consequently, we obtain the key
monotonicity inequality
\begin{equation}\label{uq:fns}
	\begin{aligned}
		&\widetilde{f}_n'(1)-\widetilde{f}_n'(0)\\
		&=
		\sum_{i\in \operatorname{int}(V_n)}\bigl(\widetilde{K}_i(u)-\widetilde{K}_i(v)\bigr)(u_i-v_i)
		+ \sum_{i\in \partial V_n}\biggl(\sum_{\triangle ijk\in F_n} \tilde{\theta}_i^{jk}\bigl(v\bigr)
		- \sum_{\triangle ijk\in F_n} \tilde{\theta}_i^{jk}\bigl(u\bigr)\biggr)(u_i-v_i)\\
		&\ge 0.
	\end{aligned}
\end{equation}
Since each angle satisfies $|\tilde{\theta}_i^{jk}(v) - \tilde{\theta}_i^{jk}(u)| < \pi$
and the vertex degree is bounded by $D$, we obtain the uniform estimate
\begin{equation*}
	\begin{aligned}
		\biggl|\sum_{\triangle ijk\in F_n}\tilde{\theta}_i^{jk}\bigl(v\bigr)
		-\sum_{\triangle ijk\in F_n}\tilde{\theta}_i^{jk}\bigl(u\bigr)\biggr|
		\le C,\qquad C:=D\pi.
	\end{aligned}
\end{equation*}
We have
\begin{equation*}
	\biggl|
	\sum_{i\in \partial V_n}
	\biggl(
	\sum_{\triangle ijk\in F_n}\tilde{\theta}_i^{jk}\bigl(v\bigr)
	-\sum_{\triangle ijk\in F_n}\tilde{\theta}_i^{jk}\bigl(u\bigr)
	\biggr)(u_i-v_i)
	\biggr|
	\le C\sum_{i\in \partial V_n}|u_i-v_i|.
\end{equation*}
Now let $u(t),v(t)$ be two solutions to the extended combinatorial Ricci flow
\eqref{Eq: ECYF} on the time interval $t\in[0,T)$.
We now exploit the $\ell^1$-summability condition.
\begin{equation*}
	\|u(t)-v(t)\|_{\ell^1(V)} < \infty,\quad \forall t\ge0.
\end{equation*}
Fix an arbitrary $t\in[0,T)$. For any $\epsilon>0$, the finiteness of the $\ell^1$
norm guarantees the existence of a finite vertex subset $P\subset V$ such that
\begin{equation*}
	\sum_{i\in V\setminus P} |u_i(t)-v_i(t)| < \epsilon.
\end{equation*}
Since the triangulation has uniformly bounded vertex degrees, we may choose $N$
sufficiently large such that $P\subset V_N$ and $V_N$ contains all neighboring
vertices of every point in $P$. Consequently, for all $n\ge N$, we have
$\partial V_n\subset V\setminus P$, which yields the boundary estimate
\begin{equation*}
	\sum_{i\in\partial V_n} |u_i(t)-v_i(t)| \le \sum_{i\in V\setminus P} |u_i(t)-v_i(t)| < \epsilon.
\end{equation*}
We obtain
\begin{equation*}
	\biggl|
	\sum_{i\in \partial V_n}
	\biggl(
	\sum_{\triangle{ijk}\in F_n}\tilde{\theta}_i^{jk}(v)
	-\sum_{\triangle{ijk}\in F_n}\tilde{\theta}_i^{jk}(u)
	\biggr)(u_i-v_i)
	\biggr|
	\le C\epsilon,
\end{equation*}
where $C>0$ is a constant depending only on the uniform vertex degree bound $D$.
Taking the limit $n\to\infty$, we conclude that the boundary term vanishes:
\begin{equation}\label{uq:blim}
	\lim_{n\to\infty}
	\biggl|
	\sum_{i\in \partial V_n}
	\biggl(
	\sum_{\triangle{ijk}\in F_n}\tilde{\theta}_i^{jk}(v)
	-\sum_{\triangle{ijk}\in F_n}\tilde{\theta}_i^{jk}(u)
	\biggr)(u_i-v_i)
	\biggr|
	= 0.
\end{equation}
Letting $n\to\infty$ and substituting the vanishing boundary limit \eqref{uq:blim}
into the finite-domain monotonicity inequality \eqref{uq:fns}, we extend the
estimate to the entire infinite vertex set $V$ and obtain
\begin{equation}\label{uq: bint}
	\begin{aligned}
		\sum_{i\in V}\bigl(\widetilde{K}_i(u)-\widetilde{K}_i(v)\bigr)(u_i-v_i)
		\ge 0.
	\end{aligned}
\end{equation}

To finish the proof, we introduce the nonnegative energy functional
\begin{equation*}
	E(t)=\sum_{i\in V}\big(u_i(t)-v_i(t)\big)^2,
\end{equation*}
which satisfies $E(0)=\sum_{i\in V}\big(u_i(0)-v_i(0)\big)^2<\epsilon$ and
$E(t)\ge 0$ for all $t\in [0,T)$.
Since the curvatures $\widetilde{K}_i$ are uniformly bounded,  $u_i(t)$
and $v_i(t)$ possess bounded time derivatives and are therefore Lipschitz
continuous. Hence each map $t\mapsto \big(u_i(t)-v_i(t)\big)^2$ is absolutely
continuous on $[0,T)$, and its derivative satisfies
\begin{equation*}
	\Bigl|\frac{d}{dt}\bigl(u_i(t)-v_i(t)\bigr)^2\Bigr|
	= 2|u_i(t)-v_i(t)|\cdot\left|\frac{du_i(t)}{dt}-\frac{dv_i(t)}{dt}\right|
	\le C_0|u_i(t)-v_i(t)|
\end{equation*}
almost everywhere on $[0,T)$, where $C_0>0$ depends only on the uniform bound
of the curvatures. By $u-v \in L^1([0, T); \ell^1(V)), \forall T>0$, we have
\begin{equation*}
	\sum_{i\in V}\int_0^T \Bigl|\frac{d}{dt}(u_i-v_i)^2\Bigr|\,dt
	\le C_0\int_0^T \|u(t)-v(t)\|_{\ell^1(V)}\,dt
	< \infty.
\end{equation*}
In addition, the series defining $E(t)$ converges at $t=0$, as $E(0)<\infty$.
All hypotheses of Lemma \ref{lem:ac-series-diff} are satisfied, so summation and time differentiation commute almost everywhere on $[0,T)$. Differentiating $E(t)$ yields
yields
\begin{equation*}
	\begin{aligned}
		\frac{dE(t)}{dt}
		&=2\sum_{i\in V}\left(\frac{du_i(t)}{dt}-\frac{dv_i(t)}{dt}\right)\big(u_i(t)-v_i(t)\big)\\
		&=-2\sum_{i\in V}\big(\widetilde{K}_i(u(t))-\widetilde{K}_i(v(t))\big)\big(u_i(t)-v_i(t)\big)\\
		&\le 0, \qquad \text{a.e. }t\in[0,T),
	\end{aligned}
\end{equation*}
where the last inequality follows directly from \eqref{uq: bint}.
As $E(t)$ is absolutely continuous on $[0,T)$, the fundamental theorem of
calculus implies
\[
E(t) - E(0) = \int_0^t \frac{dE}{ds}(s)\,ds
\]
for every $t\in[0,T)$. Since $\frac{dE}{ds}(s)\le 0$ almost everywhere, we obtain
\[
\int_0^t \frac{dE}{ds}(s)\,ds \le 0,
\]
and consequently $E(t)\le E(0)$ on $[0,T)$.
The energy functional $E(t)$ is thus nonnegative and non-increasing along the
flow, leading to the uniform stability estimate
\begin{equation*}
	E(t) = \sum_{i\in V}\big(u_i(t)-v_i(t)\big)^2 \le E(0) < \epsilon
\end{equation*}
valid for all $t\in[0,T)$. This establishes the continuous dependence of
solutions on initial data.
\hfill $\square$

\begin{remark}
    The same technique in the proof of Theorem \ref{thm:s} could be applied to the extended Euclidean combinatorial Yamabe flow in \cite{JI}, and a similar uniqueness of the solutions to the extended Euclidean combinatorial Yamabe flow could be established.
\end{remark}

\section{Appendix}

\textbf{Proof of Lemma 3.1}

By the hyperbolic cosine law expressing $d_{ij}$ in terms of $\theta^{ij}_k$, $\theta^{jk}_i$, $\theta^{ik}_j$, we have

\begin{equation*}
	\begin{aligned}
		\sinh^{2}\frac{d_{ij}}{2} &=\frac{1}{2} (\cosh d_{ij}-1)\\
		&=\frac{1}{2} \left( \frac{\cos\theta^{ij}_k+\cos\theta^{jk}_i \cos\theta^{ik}_j}{\sin\theta^{jk}_i\sin\theta^{ik}_j} -1 \right)\\
		&=\frac{1}{2} \frac{\cos\theta^{ij}_k+\cos(\theta^{jk}_i+\theta^{ik}_j)}{\sin\theta^{jk}_i\sin\theta^{ik}_j}\\
		&=\frac{\cos\frac{\theta^{ij}_k+\theta^{jk}_i+\theta^{ik}_j}{2}\cos\frac{\theta^{ij}_k-\theta^{jk}_i-\theta^{ik}_j}{2}}{\sin\theta^{jk}_i\sin\theta^{ik}_j}.
	\end{aligned}
\end{equation*}
Thus,
\begin{equation*}
		\sinh^2\frac{d_{ij}}{2}<\frac{1}{\sin^2\epsilon}.
\end{equation*}
Similarly, we have
\begin{equation*}
	\sinh^2\frac{d_{ik}}{2}<\frac{1}{\sin^2\epsilon},\quad 	\sinh^2\frac{d_{jk}}{2} <\frac{1}{\sin^2\epsilon}.
\end{equation*}
Since the triangle $\triangle ijk$ is uniformly nondegenerate, we obtain $2\epsilon-\pi<\theta^{ik}_j-\theta^{jk}_i-\theta^{ij}_k<\pi-4\epsilon$. Therefore,
\begin{align*}
\quad \frac{\sinh^2\frac{d_{ij}}{2}}{\sinh^2\frac{d_{ik}}{2}}=\frac{\cos\frac{\theta^{ij}_k-\theta^{jk}_i-\theta^{ik}_j}{2}\sin\theta^{ij}_k}{\cos\frac{\theta^{ik}_j-\theta^{jk}_i-\theta^{ij}_k}{2}\sin\theta^{ik}_j}<\frac{1}{\sin^2\epsilon}.
\end{align*}
Similarly, we have
\begin{align*}
	\quad \frac{\sinh^2\frac{d_{jk}}{2}}{\sinh^2\frac{d_{ij}}{2}}<\frac{1}{\sin^2\epsilon},\quad \frac{\sinh^2\frac{d_{jk}}{2}}{\sinh^2\frac{d_{ik}}{2}}<\frac{1}{\sin^2\epsilon}.
\end{align*}
By the hyperbolic cosine law, thus,
\begin{equation*}
	\begin{aligned}
		&\left|\cos\theta_i - \cos\hat{\theta}_i\right|\\
		&= \left| \frac{\cosh d_{ij} \cosh d_{ik} - \cosh d_{jk}}{\sinh d_{ij} \sinh d_{ik}}
		- \frac{\cosh \tilde{d}_{ij} \cosh \tilde{d}_{ik} - \cosh \tilde{d}_{jk}}{\sinh \tilde{d}_{ij} \sinh \tilde{d}_{ik}} \right| \\[6pt]
		&= \left| \frac{4\sinh^2\frac{d_{ij}}{2}\sinh^2\frac{d_{ik}}{2}
			- 2\sinh^2\frac{d_{jk}}{2} + 2\sinh^2\frac{d_{ij}}{2} + 2\sinh^2\frac{d_{ik}}{2}}{\sinh d_{ij}\sinh d_{ik}} \right. \\[6pt]
		&\quad \left. - \frac{4\sinh^2\frac{\tilde{d}_{ij}}{2}\sinh^2\frac{\tilde{d}_{ik}}{2}
			- 2\sinh^2\frac{\tilde{d}_{jk}}{2} + 2\sinh^2\frac{\tilde{d}_{ij}}{2} + 2\sinh^2\frac{\tilde{d}_{ik}}{2}}{\sinh \tilde{d}_{ij}\sinh \tilde{d}_{ik}} \right|\\
		&\le \left|\frac{2\sinh^2\frac{d_{ij}}{2} }{\sinh d_{ij}\sinh d_{ik}} -\frac{2\sinh^2\frac{\tilde d_{ij}}{2} }{\sinh\tilde d_{ij}\sinh\tilde d_{ik}}\right|+\left|\frac{2\sinh^2\frac{d_{ik}}{2} }{\sinh d_{ij}\sinh d_{ik}} -\frac{2\sinh^2\frac{\tilde d_{ik}}{2} }{\sinh\tilde d_{ij}\sinh\tilde d_{ik}}\right|\\
		&+\left|\frac{2\sinh^2\frac{d_{jk}}{2} }{\sinh d_{ij}\sinh d_{ik}} -\frac{2\sinh^2\frac{\tilde d_{jk}}{2} }{\sinh\tilde d_{ij}\sinh\tilde d_{ik}}\right|+\left|\frac{4\sinh^2\frac{d_{ij}}{2}\sinh^2\frac{d_{ik}}{2} }{\sinh d_{ij}\sinh d_{ik}} -\frac{4\sinh^2\frac{\tilde d_{ij}}{2}\sinh^2\frac{\tilde d_{ik}}{2} }{\sinh\tilde d_{ij}\sinh\tilde d_{ik}}\right|.
			\end{aligned}
\end{equation*}
We can estimate this expression by decomposing it into four terms. Let us first estimate the first term,
\begin{equation*}
	\begin{aligned}
		& \left| \frac{2\sinh^2 \frac{d_{ij}}{2} }{\sinh d_{ij}\sinh d_{ik}}
		- \frac{2\sinh^2 \frac{\tilde d_{ij}}{2} }{\sinh\tilde d_{ij}\sinh\tilde d_{ik}} \right| \\
		=&\, 2\left| \frac{\sinh^2 \frac{d_{ij}}{2}}
		{4\sinh \frac{d_{ij}}{2}\cosh \frac{d_{ij}}{2}\sinh \frac{d_{ik}}{2}\cosh \frac{d_{ik}}{2}}
		- \frac{e^{u_i+u_j}\sinh^2 \frac{d_{ij}}{2}}
		{4e^{u_i+\frac{u_j+u_k}{2}}\sinh \frac{d_{ij}}{2}
			\cosh \frac{\tilde d_{ij}}{2}\sinh \frac{d_{ik}}{2}\cosh \frac{\tilde d_{ik}}{2}} \right| \\
		=&\, \frac{\sinh \frac{d_{ij}}{2}}{2\sinh \frac{d_{ik}}{2}}
		\left| \frac{1}{\cosh \frac{d_{ij}}{2}\cosh \frac{d_{ik}}{2}}
		- \frac{e^{\frac{u_j-u_k}{2}}}{\cosh \frac{\tilde d_{ij}}{2}\cosh \frac{\tilde d_{ik}}{2}} \right| \\
		=&\, \frac{\sinh \frac{d_{ij}}{2}}
		{2\sinh \frac{d_{ik}}{2}\cosh \frac{\tilde d_{ij}}{2}\cosh \frac{\tilde d_{ik}}{2}}
		\left| \frac{\cosh \frac{\tilde d_{ij}}{2}\cosh \frac{\tilde d_{ik}}{2}}
		{\cosh \frac{d_{ij}}{2}\cosh \frac{d_{ik}}{2}}
		- e^{\frac{u_j-u_k}{2}} \right| \\
		<&\, \frac{1}{2\sin \epsilon}
		\left[ \left| \frac{\cosh \frac{\tilde d_{ij}}{2}\cosh \frac{\tilde d_{ik}}{2}}
		{\cosh \frac{d_{ij}}{2}\cosh \frac{d_{ik}}{2}} - 1 \right|
		+ \left| 1 - e^{\frac{u_j-u_k}{2}} \right| \right].
	\end{aligned}
\end{equation*}
Note that

\begin{equation*}
	\begin{aligned}
		&\left| \frac{\cosh \frac{\tilde d_{ij}}{2}\cosh \frac{\tilde d_{ik}}{2}}{\cosh \frac{d_{ij}}{2}\cosh \frac{d_{ik}}{2}} - 1 \right|\\
		=& \frac{\left| \cosh^2 \frac{\tilde d_{ij}}{2}\cosh^2 \frac{\tilde d_{ik}}{2} - \cosh^2 \frac{d_{ij}}{2}\cosh^2 \frac{d_{ik}}{2} \right|}{\cosh \frac{d_{ij}}{2}\cosh \frac{d_{ik}}{2}\left( \cosh \frac{\tilde d_{ij}}{2}\cosh \frac{\tilde d_{ik}}{2} + \cosh \frac{d_{ij}}{2}\cosh \frac{d_{ik}}{2} \right)} \\
		<& \left| \cosh^2 \frac{\tilde d_{ij}}{2}\cosh^2 \frac{\tilde d_{ik}}{2} - \cosh^2 \frac{d_{ij}}{2}\cosh^2 \frac{d_{ik}}{2} \right| \\
		=& \left| \left( 1+e^{u_i+u_j}\sinh^2 \frac{d_{ij}}{2} \right)\!
		\left( 1+e^{u_i+u_k}\sinh^2 \frac{d_{ik}}{2} \right)
		- \left( 1+\sinh^2 \frac{d_{ij}}{2} \right)\!
		\left( 1+\sinh^2 \frac{d_{ik}}{2} \right) \right| \\
		\le& \sinh^2 \frac{d_{ij}}{2}\left| e^{u_i+u_j}-1 \right|
		+ \sinh^2 \frac{d_{ik}}{2}\left| e^{u_i+u_k}-1 \right| \\
		&+ \sinh^2 \frac{d_{ij}}{2}\sinh^2 \frac{d_{ik}}{2}\left| e^{2u_i+u_j+u_k}-1 \right| \\
		<& \frac{3}{\sin^4\epsilon}\left( e^{4\delta}-1 \right).
	\end{aligned}
\end{equation*}
Thus
\begin{equation*}
	\begin{aligned}
		\left|\frac{2\sinh^2\frac{d_{ij}}{2} }{\sinh d_{ij}\sinh d_{ik}} -\frac{2\sinh^2\frac{\tilde d_{ij}}{2} }{\sinh\tilde d_{ij}\sinh\tilde d_{ik}}\right|<\frac{2(e^{4\delta}-1)}{\sin^5\epsilon}.
		\end{aligned}
\end{equation*}
Similarly, the second term can be estimated as follows,
\begin{equation*}
	\begin{aligned}
		\left|\frac{2\sinh^2\frac{d_{ik}}{2} }{\sinh d_{ij}\sinh d_{ik}} -\frac{2\sinh^2\frac{\tilde d_{ik}}{2} }{\sinh\tilde d_{ij}\sinh\tilde d_{ik}}\right|<\frac{2(e^{4\delta}-1)}{\sin^5\epsilon}.
	\end{aligned}
\end{equation*}
We now estimate the third term,
\begin{equation*}
	\begin{aligned}
		& \left| \frac{2\sinh^2 \frac{d_{jk}}{2} }{\sinh d_{ij}\sinh d_{ik}}
		- \frac{2\sinh^2 \frac{\tilde d_{jk}}{2} }{\sinh\tilde d_{ij}\sinh\tilde d_{ik}} \right| \\
		=&\, 2\left| \frac{\sinh^2 \frac{d_{jk}}{2}}
		{4\sinh \frac{d_{ij}}{2}\cosh \frac{d_{ij}}{2}\sinh \frac{d_{ik}}{2}\cosh \frac{d_{ik}}{2}}
		- \frac{e^{u_j+u_k}\sinh^2 \frac{d_{jk}}{2}}
		{4e^{u_i+\frac{u_j+u_k}{2}}\sinh \frac{d_{ij}}{2}\cosh \frac{\tilde d_{ij}}{2}
			\sinh \frac{d_{ik}}{2}\cosh \frac{\tilde d_{ik}}{2}} \right| \\
		=&\, \frac{\sinh^2 \frac{d_{jk}}{2}}{2\sinh \frac{d_{ij}}{2}\sinh \frac{d_{ik}}{2}}
		\left| \frac{1}{\cosh \frac{d_{ij}}{2}\cosh \frac{d_{ik}}{2}}
		- \frac{e^{\frac{u_j+u_k}{2}-u_i}}{\cosh \frac{\tilde d_{ij}}{2}\cosh \frac{\tilde d_{ik}}{2}} \right| \\
		=&\, \frac{\sinh^2 \frac{d_{jk}}{2}}
		{2\sinh \frac{d_{ij}}{2}\sinh \frac{d_{ik}}{2}\cosh \frac{\tilde d_{ij}}{2}\cosh \frac{\tilde d_{ik}}{2}}
		\left| \frac{\cosh \frac{\tilde d_{ij}}{2}\cosh \frac{\tilde d_{ik}}{2}}
		{\cosh \frac{d_{ij}}{2}\cosh \frac{d_{ik}}{2}}
		- e^{\frac{u_j+u_k}{2}-u_i} \right| \\
		<&\, \frac{1}{2\sin^2\epsilon}
		\left[ \left| \frac{\cosh \frac{\tilde d_{ij}}{2}\cosh \frac{\tilde d_{ik}}{2}}
		{\cosh \frac{d_{ij}}{2}\cosh \frac{d_{ik}}{2}} - 1 \right|
		+ \left| 1-e^{\frac{u_j-u_k}{2}} \right| \right] \\
		<&\, \frac{1}{2\sin^2\epsilon}
		\left( \frac{3\left(e^{4\delta}-1\right)}{\sin^4\epsilon} + e^{4\delta}-1 \right) \\
		<&\, \frac{2(e^{4\delta}-1)}{\sin^6\epsilon}.
	\end{aligned}
\end{equation*}
We now estimate the last term,
\begin{equation*}
	\begin{aligned}
		& \left| \frac{4\sinh^2 \frac{d_{ij}}{2}\sinh^2 \frac{d_{ik}}{2} }{\sinh d_{ij}\sinh d_{ik}}
		- \frac{4\sinh^2 \frac{\tilde d_{ij}}{2}\sinh^2 \frac{\tilde d_{ik}}{2} }{\sinh\tilde d_{ij}\sinh\tilde d_{ik}} \right| \\
		=&\, \left| \frac{\sinh \frac{d_{ij}}{2}\sinh \frac{d_{ik}}{2} }{\cosh \frac{d_{ij}}{2}\cosh \frac{d_{ik}}{2}}
		- \frac{\sinh \frac{\tilde d_{ij}}{2}\sinh \frac{\tilde d_{ik}}{2} }{\cosh \frac{\tilde d_{ij}}{2}\cosh \frac{\tilde d_{ik}}{2}} \right| \\
		=&\, \left| \frac{\sinh \frac{d_{ij}}{2}\sinh \frac{d_{ik}}{2} }{\cosh \frac{d_{ij}}{2}\cosh \frac{d_{ik}}{2}}
		- \frac{e^{u_i+\frac{u_j+u_k}{2}}\sinh \frac{d_{ij}}{2}\sinh \frac{d_{ik}}{2} }{\cosh \frac{\tilde d_{ij}}{2}\cosh \frac{\tilde d_{ik}}{2}} \right| \\
		=&\, \frac{\sinh \frac{d_{ij}}{2}\sinh \frac{d_{ik}}{2}}{\cosh \frac{\tilde d_{ij}}{2}\cosh \frac{\tilde d_{ik}}{2}}
		\left| \frac{\cosh \frac{\tilde d_{ij}}{2}\cosh \frac{\tilde d_{ik}}{2} }{\cosh \frac{d_{ij}}{2}\cosh \frac{d_{ik}}{2}}
		- e^{u_i+\frac{u_j+u_k}{2}} \right| \\
		<&\, \sinh \frac{d_{ij}}{2}\sinh \frac{d_{ik}}{2}
		\left[ \left| \frac{\cosh \frac{\tilde d_{ij}}{2}\cosh \frac{\tilde d_{ik}}{2}}{\cosh \frac{d_{ij}}{2}\cosh \frac{d_{ik}}{2}} - 1 \right|
		+ \left| 1 - e^{u_i+\frac{u_j+u_k}{2}} \right| \right] \\
		<&\, \frac{1}{\sin^2 \epsilon}
		\left( \frac{3\left(e^{4\delta}-1\right)}{\sin^4\epsilon} + e^{4\delta}-1 \right) \\
		<&\, \frac{4\left(e^{4\delta}-1\right)}{\sin^6\epsilon}.
	\end{aligned}
\end{equation*}
From the above calculations, we conclude that
\begin{align*}
	\left|\cos\theta_i - \cos\hat{\theta}_i\right|<\frac{10(e^{4\delta}-1)}{\sin^6\epsilon}.
\end{align*}
Let
\begin{align*}
	\delta = \frac{1}{4}\log\left(1 + \frac{\sin^6\epsilon}{10}\left(1 - \cos\frac{\epsilon}{8}\right)\right).
\end{align*}
This choice yields the estimate
$
|\cos\theta_i - \cos\hat{\theta}_i| \leq 1-\cos\frac{\epsilon}{8}.
$
Note that $\cos x$ is strictly decreasing on $[0,\pi]$. By the mean value theorem, there exists some $\xi$ lying between $\theta_i$ and $\hat{\theta}_i$ such that
\[
|\cos\theta_i - \cos\hat{\theta}_i| = |\sin\xi|\cdot |\theta_i - \hat{\theta}_i|.
\]
Combining the above estimate with the fact that $|\sin\xi|$ is bounded away from zero on the relevant angular interval, we further obtain
\[
|\hat{\theta}_{i}-\theta_{i}|\le \frac{\epsilon}{8}.
\] \hfill $\square$

\end{document}